\documentclass[12pt,amstex]{amsart}

\usepackage{stmaryrd}

\usepackage{epsfig}
\usepackage{amsmath}
\usepackage{amssymb}
\usepackage{amscd}
\usepackage{graphicx}

\usepackage{pstricks}

\usepackage{tocvsec2}

\usepackage{hyperref}

\topmargin=0pt \oddsidemargin=0pt \evensidemargin=0pt
\textwidth=15cm \textheight=22.2cm \raggedbottom

\newtheorem{thm}{Theorem}[section]
\newtheorem{lem}[thm]{Lemma}

\newtheorem{prop}[thm]{Proposition}

\theoremstyle{definition}
\newtheorem{de}[thm]{Definition}

\theoremstyle{remark}

\numberwithin{equation}{section}

\input xy
\xyoption{all}

\def \N {\mathbb N}

\def \Z {\mathbb Z}

\def \B {\mathcal{B}}

\def \X {\mathcal{X}}
\def \Y {\mathcal{Y}}

\def \O {\mathcal{O}}

\def \h {\hat }

\def \a {\alpha }
\def \b {\beta}
\def \ep {\epsilon}

\def \D {\Delta}

\def\w {\omega}

\def \lra{\longrightarrow}

\begin{document}

\title[Weakly mixing, proximal topological models]{Weakly mixing, proximal topological models for ergodic systems and
applications}

\author{Zhengxing Lian}
\author{Song Shao}
\author{Xiangdong Ye}

\address{Wu Wen-Tsun Key Laboratory of Mathematics, USTC, Chinese Academy of Sciences and
Department of Mathematics, University of Science and Technology of China,
Hefei, Anhui, 230026, P.R. China.}

\email{lianzx@mail.ustc.edu.cn}\email{songshao@ustc.edu.cn}
\email{yexd@ustc.edu.cn}

\subjclass[2000]{Primary: 37B05, 37A05} \keywords{topological model, weakly mixing, proximal, minimal point}

\thanks{Authors are supported by NNSF of China (11171320, 11371339)}

\date{April 9, 2014}
\date{May 5, 2014}
\date{May 24, 2014}
\date{May 30, 2014}
\date{July 05, 2014}

\begin{abstract}

In this paper it is shown that every non-periodic ergodic system has two topologically
weakly mixing, fully supported models: one is non-minimal but has a dense set
of minimal points; and the other one is proximal. 
Also for independent interests, for a given Kakutani-Rokhlin tower with relatively prime column heights,
it is demonstrated how to get a new taller Kakutani-Rokhlin tower with same property,
which can be used in Weiss's proof of the Jewett-Krieger's theorem and the proofs of our theorems. Applications of the results are given.


\end{abstract}

\maketitle

\section{Introduction}

A measurable {\em system} is a quadruple $(X,\X, \mu, T)$, where
$(X,\X,\mu )$ is a Lebesgue probability space and $T : X \rightarrow
X$ is an invertible measure preserving transformation. A {\em topological dynamical system} is a pair $(X,
T)$, where $X$ is a compact metric space and $T : X \rightarrow  X$
is a homeomorphism.


\medskip
Let $(X,\X, \mu, T)$ be an ergodic dynamical system. We say that $(\h{X}, \h{\X}, \h{\mu}, \h{T})$ is a {\em topological model}
(or just a {\em model}) for $(X,\X, \mu, T)$ if $(\h{X}, \h{T})$ is a
topological system, $\h{\mu}$ is an invariant Borel probability measure on $\h{X}$ and the systems
$(X,\X, \mu, T)$ and $(\h{X}, \h{\X}, \h{\mu}, \h{T})$ are measure
theoretically isomorphic.

\medskip


\medskip

The theory of topological models is an important part in dynamical systems and has
many applications. The well known
Jewett-Krieger's theorem asserts that every non-periodic ergodic system has a topological model which
is strictly ergodic. Lehrer \cite{Lehrer} showed that we can further require the model to be topologically (strongly)
mixing. We refer to \cite{Glasner, GW06, Weiss89, Weiss00} for surveys and nice results on this topics.
We note that topological models can also be used to obtain the pointwise convergence of non-conventional ergodic averages, \cite{HSY13}.

We mention that the models obtained above are minimal. In this paper we study non-minimal models for a given ergodic system,
and obtain their applications. Here are our main results of this paper. Note that an ergodic system is non-periodic if it has no atom.

\begin{thm}\label{main-result}
\begin{enumerate}
  \item Every non-periodic ergodic system has a topological model which is a non-minimal
  topologically weakly mixing system with a full support and a dense set of minimal points.
  \item Every non-periodic ergodic system has a topological model which is a topologically
  weakly mixing system with a full support and a unique fixed point as its only minimal point.
\end{enumerate}
\end{thm}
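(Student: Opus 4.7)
The plan is to construct both topological models as symbolic systems obtained from refining sequences of Kakutani-Rokhlin towers with pairwise coprime column heights, in the spirit of Weiss's proof of the Jewett-Krieger theorem and Lehrer's extension that produces (strongly) mixing models. Let $(X,\X,\mu,T)$ be the given non-periodic ergodic system. The tower-promotion lemma advertised in the abstract lets one build a nested sequence $\{\tau_n\}$ of Kakutani-Rokhlin towers whose base partitions generate $\X$, whose error sets have $\mu$-measure tending to $0$ at a prescribed rate, and whose column heights at every stage are pairwise coprime. Coding each $x\in X$ by which column and level of $\tau_n$ it visits at each stage yields a measurable equivariant map $\phi\colon X\to A^{\Z}$ into a finite-alphabet shift, and the coprime column-heights condition forces the orbit closure $\h X:=\ov{\phi(X)}$ to be topologically weakly mixing via the classical prime-gap argument.

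For part (1) I would modify $\phi$ at each stage by overriding a small fraction of columns of $\tau_n$ with symbolic names of prescribed periodic orbits, drawn from a fixed countable family $\{w_m\}$ in $\h X$ chosen to be dense. If the total $\mu$-measure of the modified columns is summable in $n$, Borel-Cantelli ensures that $\phi_*\mu$ still gives a system isomorphic to $(X,\X,\mu,T)$ with full support on $\h X$, while each $w_m^{\infty}$ lies in $\h X$ and its shift orbit is a minimal set; non-minimality is automatic since $\phi_*\mu$ puts no mass on periodic orbits, and the chosen density of the $\{w_m\}$ produces a dense set of minimal points.

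For part (2) I would enlarge the alphabet by a fresh symbol $*$ and insert, at each stage $n$, very long strings of $*$'s of length $\ell_n\to\infty$ into the symbolic names, at positions chosen so that every orbit of $\h X$ visits arbitrarily long $*$-blocks. Then the fixed point $*^{\Z}$ lies in the $\sigma$-orbit closure of every point of $\h X$, forcing $\{*^{\Z}\}$ to be the unique minimal subset. The insertion must be engineered so that its total $\mu$-density vanishes fast enough to preserve measure-theoretic isomorphism and full support, and so that the underlying coprime column-heights structure survives and continues to deliver weak mixing of $\h X$.

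The main obstacle in both parts is that three demands pull against one another: the coding must remain a measure-theoretic isomorphism onto $(\h X,\h{\X},\h\mu,\sigma)$; the orbit closure must stay topologically weakly mixing, which requires preserving the coprime column-heights mechanism through the modifications; and the topological surgery — periodic stamps in (1), $*$-spacers in (2) — must actually produce the dense minimal points or the proximality. The tower-promotion lemma is precisely the tool that absorbs the inserted markings at each stage into a new Kakutani-Rokhlin tower whose column heights remain pairwise coprime, so weak mixing survives while the topological modifications accumulate. Iterating this construction with diagonal control over all three constraints is where the real technical work will lie.
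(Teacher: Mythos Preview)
Your outline diverges from the paper in both parts, and part~(1) as written has a genuine gap.

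For part~(1) the paper does \emph{not} run the coprime-height machinery at all. Instead it uses a separate lemma producing a single Kakutani tower with \emph{infinitely many} columns of distinct heights, and on a summable-measure selection of those columns it copies two kinds of names: words $\omega_{2m}$ listing every $2m$-name over the alphabet (this alone forces $\mu\times\mu\big((E_1\times F_1)\cap (T\times T)^{-m}(E_2\times F_2)\big)>0$ for all cylinder sets, giving weak mixing without any prime-gap argument), and words of the form $(\omega_{2i-1})^m 1^{L}$ with $L$ large. The growing repetitions make every transitive point piecewise-syndetically recurrent, so the $M$-system characterisation yields a dense set of minimal points; the long $1$-tails destroy syndetic recurrence, so the system is not minimal. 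No explicit minimal subsystems are planted --- density of minimal points is read off from the recurrence of a \emph{generic} point, which neatly sidesteps the bootstrap problem you face. Your proposal, by contrast, asks for a family $\{w_m\}$ ``in $\h X$ chosen to be dense'', but $\h X$ is the support of the very map you are still constructing, so its language is unavailable when you choose the $w_m$; and a single override of a column by $w_m$ does not put $w_m^{\infty}$ into the orbit closure --- you would need arbitrarily long powers $w_m^{k}$ to appear with positive measure, which your description does not arrange.

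For part~(2) your $*$-spacer idea is close in spirit, but the paper proceeds in two steps rather than one. Using the coprime-height tower lemma it paints the symbol $2$ in blocks of length $j$ at period $l_j$ across \emph{every} column of the $j$-th nested K-R tower, so that for any transitive point $w$ the set $N(w,[2]_0)$ is thickly syndetic; hence $N(w,[1]_0)$ fails to be piecewise syndetic and the minimal points are confined to a closed invariant null set. Only then does the paper collapse $\overline{\mathrm{Min}(Y)}$ to a single fixed point. This two-step route avoids the delicate issue in your direct construction: you must show that \emph{every} point of the orbit closure, not merely $\mu$-a.e.\ points, sees arbitrarily long $*$-blocks, which is a topological statement that does not follow from Borel--Cantelli and requires exactly the kind of syndetic placement the paper carries out.
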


Note that a topological system $(X,T)$ with a unique fixed point as its only minimal point is proximal,
i.e. for all $x,y\in X$, $\inf_n d(T^nx,T^ny)=0$. Hence Theorem \ref{main-result}(2) means
that every non-periodic ergodic system has a topological weakly mixing and proximal model with a full support.

\medskip

In Weiss's new proof of the Jewett-Krieger's theorem \cite{Weiss85, Weiss00} and Weiss' theorem on
the doubly minimal model \cite{Weiss95}, a technical complement should be discussed when the column
heights of the Kakutani-Rokhlin tower are not relatively prime. In this paper, we found that one
can avoid this and thus simplify the proofs by using a technical lemma, i.e. Lemma \ref{KR extra prime tower}.
This lemma will be used in the proofs of our theorems and we believe that it will be
useful in other settings.

\medskip
We find two applications of our results. One gives an affirmative answer to a question in \cite{LYY13}
by showing that if $(X,T)$ is a topological system and $(M(X), T_M)$ is the induced system on the probability space,
then the density of minimal points of $(M(X),T_M)$ does not implies $(X,T)$ has the same property. The other
one concerns the existence of a proximal topological K-system which was constructed in \cite{HLY12}. We obtain
a lot of such examples simply using the proximal topological models of any measurable K-systems.

\medskip
\noindent{\bf Acknowledgments:} We would like to thank Wen Huang for very useful suggestions.

\section{Preliminaries}
In this section we recall some notions which we will use in the
following sections.

\subsection{A measurable system}
A measurable system is {\em ergodic} if all
$T$-invariant sets have measures either $0$ or $1$.
For an ergodic system, either the space $X$ consists of a finite set of points on which $\mu$
is equidistributed, or the measure $\mu$ is atom-less. In the first case the system is called
{\em periodic}, and it is called {\em non-periodic} in the latter.

\medskip

A {\em homomorphism} from $(X,\X, \mu, T)$ to a system $(Y,\Y, \nu,
S)$ is a measurable map $\pi : X_0 \rightarrow  Y_0$, where $X_0$ is
a $T$-invariant subset of $X$ and $Y_0$ is an $S$-invariant subset
of $Y$, both of full measure, such that $\pi_*\mu=\mu\circ
\pi^{-1}=\nu$ and $S\circ \pi(x)=\pi\circ T(x)$ for $x\in X_0$. When
we have such a homomorphism we say that the system $(Y,\Y, \nu, S)$
is a {\em factor} of the system $(X,\X, \mu , T)$. If the factor map
$\pi: X_0\rightarrow  Y_0$ can be chosen to be bijective and $\pi^{-1}$ is also
measurable, then we say that the systems $(X,\X, \mu, T)$ and $(Y,\Y, \nu, S)$ are {\em (measure
theoretically) isomorphic}.

\medskip

\subsection{A topological system}

A topological system $(X, T)$ is {\em transitive} if for any non-empty open sets $U,V$ there is some
$n\in \Z_+$ such that $U\cap T^{-n}V\neq \emptyset$. When $X$ has no isolated points, $(X,T)$ is transitive if and only if
there exists some point
$x\in X$ whose orbit $\O(x,T)=\{T^nx: n\in \Z_+\}$ is dense in $X$ and
we call such a point a {\em transitive point}. The system is {\em
minimal} if the orbit of any point is dense in $X$.
A point $x\in X$ is called a {\em minimal point} if
$(\overline{\O(x,T)}, T)$ is minimal. $(X,T)$ is {\em (topologically) weakly mixing}
if the product system $(X\times X, T\times T)$ is transitive.

\medskip

A {\em factor} of a topological system $(X, T)$ is another topological
system $(Y, S)$ such that there exists a continuous and onto map $\phi: X \rightarrow Y$
satisfying $S\circ \phi = \phi\circ T$. In this case, $(X,T)$ is
called an {\em extension } of $(Y,S)$. The map $\phi$ is called a
{\em factor map}.



\subsection{Rokhlin tower}
We need some basic knowledge related to Kakutani-Rokhlin towers. We will use notations
from \cite{Glasner, GW06, Weiss00}. 

\medskip

Let $(X,\X,\mu, T)$ be a dynamical system. Let $B \in \X$. An array
$$\mathfrak{c}= \{B, TB, \ldots, T^{N-1}B \}$$ with $\{T^jB\}_{j=0}^{N-1}$
pairwise disjoint is called a {\em Rokhlin tower} or a {\em column over B
of height $N$}. The set $B$ is called the {\em base} of the tower, and $T^{N-1}B$
is its {\em roof}.  Let $|\mathfrak{c}|=\bigcup_{j=0}^{N-1} T^jB$ the {\em carrier}
of $\mathfrak{c}$. A collection $\mathfrak{t}$ of disjoint columns $\mathfrak{c}_k$
(with bases $B_k$ and heights $N_k$) is called a {\em tower} and let $|\mathfrak{t}|
=\bigcup_{k}|\mathfrak{c}|$. The union of the bases $B=\bigcup_k B_k$ is the
 {\em base} of $\mathfrak{t}$, and the union of the roofs is the {\em roof}
 of $\mathfrak{t}$. The sets $\{T^ix: 0\le i<N_k\}$ for $x\in B_k$ are called the {\em fibers} of $\mathfrak{t}$.

\medskip

Here is the well known Rokhlin's Lemma.

\begin{thm}[Rokhlin's Lemma]
Let $(X,\X,\mu, T)$ be an ergodic system. Given an $\ep>0$ and a natural number $N$,
there exists a Rokhlin tower $\mathfrak{c}$ of height $N$ with base $B\in \X$ such that $\mu(|\mathfrak{c}|)>1-\ep$.
\end{thm}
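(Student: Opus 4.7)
My plan is to prove this via Kakutani's skyscraper construction. Since in this paper's setting the ergodic system is implicitly non-periodic (the measure is atom-less), almost every point is aperiodic, and this is the structural hypothesis that makes the lemma possible. The first step is to pick a measurable set $A$ with $0 < \mu(A) < \d$ for a small $\d$ to be fixed later (essentially $\ep/(2N)$); such $A$ exists precisely because $\mu$ is atom-less.

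Next I would partition $A$ by first-return times, setting
\[
A_n = \{x \in A : Tx, \ldots, T^{n-1}x \notin A,\ T^n x \in A\}
\]
for each $n\ge 1$. Poincar\'e recurrence (available since $(X,\X,\mu,T)$ is measure preserving on a probability space) shows that these exhaust $A$ modulo a null set, and the resulting skyscraper $\bigsqcup_{n\ge 1}\bigsqcup_{0\le i<n} T^i A_n$ covers $X$ modulo null sets. Within each column of height $n\ge N$ I would stack $\lfloor n/N\rfloor$ sub-columns of exact height $N$, so the natural base for such a sub-column is $B_n := \bigsqcup_{j=0}^{\lfloor n/N\rfloor -1} T^{jN} A_n$, and I would then set $B := \bigsqcup_{n\ge N} B_n$. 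The sets $B, TB, \ldots, T^{N-1}B$ are pairwise disjoint because they are unions of distinct levels of the skyscraper, so $\mathfrak{c} = \{B, TB, \ldots, T^{N-1}B\}$ is a genuine Rokhlin tower of height $N$.

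For the measure estimate I would account for the wasted portion of $X\setminus |\mathfrak{c}|$: the short columns with $n<N$ contribute at most $\sum_{n<N} n\,\mu(A_n) \le (N-1)\mu(A)$, and the unused top of each tall column with $n\ge N$ contributes at most $N-1$ levels, giving another $(N-1)\mu(A)$. Choosing $\mu(A) < \ep/(2N)$ therefore yields $\mu(X\setminus|\mathfrak{c}|) < \ep$. The only real subtlety is justifying the skyscraper decomposition, which depends on Poincar\'e recurrence plus the ability to choose $\mu(A)$ arbitrarily small; both are secured by the combination of ergodicity and non-periodicity, so no serious obstacle is expected.
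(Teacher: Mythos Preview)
Your argument is correct and is the standard Kakutani skyscraper proof of Rokhlin's Lemma: choose a set $A$ of measure less than $\ep/(2N)$, build the Kakutani tower over $A$, slice each column into blocks of height $N$, and observe that what is left over (the short columns together with the remainders atop the tall ones) has measure at most $2(N-1)\mu(A)<\ep$. The only point worth flagging is that you correctly identify the implicit non-periodicity hypothesis needed to obtain arbitrarily small sets of positive measure; the paper's statement omits this, but the authors use the lemma only for non-periodic systems.

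As for comparison with the paper: there is nothing to compare. The paper merely records Rokhlin's Lemma as a well-known background fact and supplies no proof at all, so your proposal goes beyond what the paper does rather than diverging from it.
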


\subsection{Refining a tower according to a partition}
Let $\mathfrak{t}$ be a tower with columns $\{\mathfrak{c}_k:k\in K\}$
($K$ is finite or countable) and base $B=\bigcup_{k\in K}B_k$. Given a partition
(finite or countable) $\a$, we define an equivalence relation on $B$ as
follows: $x\sim y$ iff $x$ and $y$ are in the same base $B_k$ and for every
$0\le j< N_k$, $T^jx$ and $T^jy$ are in the same elements of $\a$, i.e.
$x$ and $y$ have the same $(\a, N_k)$-name. Now we consider each
equivalence class $B_{k,{\bf a}}$, with {\bf a} an $(\a, N_k)$-name,
as a base of the column $\mathfrak{c}_{k, {\bf a}}=\{B_{k,{\bf a}}, TB_{k,{\bf a}},
\ldots,T^{N_k-1} B_{k,{\bf a}}\}$ and say that the resulting tower $\mathfrak{t}_\a=
\{\mathfrak{c}_{k,{\bf a}}: {\bf a}\in \a^{N_k}, k\in K\}$
is the {\em tower $\mathfrak{t}$ refined according to $\a$}.


\subsection{Kakutani-Rokhlin tower}
For an ergodic system $(X,\X,\mu, T)$, let $B\in \X$ with positive measure,
then it is clear that \(\bigcup_{n\geq 0}T^nB=X\) (mod\ $\mu$). Define the {\em return time function} $r_B: B \rightarrow \N\cup\{\infty\}$ by
\[r_B(x)=\min \ \{n\geq 1:T^n x \in B\}\]
when this minimum is finite and $r_B(x)=\infty$ otherwise. Let $B_k=\{x\in B:r_B(x)=k\}$
and note that by Poincar\'{e}'s recurrence theorem $B_\infty$ is a null set. Let $\mathfrak{c}_k$
be the column $\{B_k,TB_k...,T^{k-1}B_k\}$ and we call the tower
$$\mathfrak{t}=\mathfrak{t}(B)=\{\mathfrak{c}_k:k=1,2...\}$$ the {\em Kakutani tower over $B$}. If the Kakutani tower over B
has finitely many columns (i.e. the function $r_B$ is bounded) we say that $B$ has a {\em finite height}
and we call the Kakutani tower over $B$ a {\em Kakutani-Rokhlin tower} or {\em a K-R tower}.
The number $\max r_B$ is called the {\em height} of B or the {\em height} of K-R tower.


\subsection{Symbolic dynamics}

Let $S$ be a finite alphabet with $m$ symbols, $m \ge 2$. We usually
suppose that $S=\{0,1,\cdots,m-1\}$. Let $\Omega=S^{\Z}$ be the
set of all sequences $x=\ldots x_{-1}x_0x_1 \ldots=(x_i)_{i\in \Z}$, $x_i \in
S$, $i \in \Z$, with the product topology. A metric compatible is
given by $d(x,y)=\frac{1}{1+k}$, where $k=\min \{|n|:x_n \not= y_n
\}$, $x,y \in \Omega$. The shift map $\sigma: \Omega \lra \Omega$
is defined by $(\sigma x)_n = x_{n+1}$ for all $n \in \Z$. The
pair $(\Omega,\sigma)$ is called a {\em shift dynamical system}. Any subsystem of $(\Omega, \sigma)$
is called a {\em subshift system}.
Similarly we can replace $\Z$ by $\Z_+=\{0,1,2,\ldots \}$, and $\sigma$ will be not a
homeomorphism but a surjective map.

Each element of $S^{\ast}= \bigcup_{k \ge 1} S^k$ is called {\em a word} or {\em a block} (over $S$).
We use $|A|=n$ to denote the length of $A$ if $A=a_1\ldots a_n$.
If $\omega=(\cdots \omega_{-1} \omega_0 \omega_1 \cdots) \in \Omega$ and $a \le b \in \Z$, then
$\omega[a,b]=:\omega_{a} \omega_{a+1} \cdots \omega_{b}$ is a $(b-a+1)$-word occurring in
$\omega$ starting at place $a$ and ending at place $b$.
Similarly we define $A[a,b]$ when $A$ is a word. A word $A$ {\em appears} in the word $B$ if there are some $a\le b$ such that
$B[a,b]=A$.

For $n\in\N$ and words $A_1,\ldots, A_n$, we denote by $A_1\ldots A_n$ the concatenation of $A_1,\ldots, A_n$.
When $A_1=\ldots=A_n=A$ denote $A_1\ldots A_n$ by $A^n$. 
If $(X,\sigma)$ is a subshift system, let $[i]=[i]_X=\{x\in X:x(0)=i\}$ for $i\in S$, and
$[A]=[A]_X=\{x\in X:x_0 x_1\cdots x_{(|A|-1)}=A\}$ for any word $A$.


\subsection{Partitions}

Let $(X,\X,\mu, T)$ be a measurable system.
A {\em partition} $\a$ of $X$ is a family of disjoint measurable subsets of $X$ whose union is $X$. Let $\a$ and $\b$ be two partitions of $(X,\X,\mu,T)$. One says that $\a$ {\em refines} $\b$, denoted by $\a\succ \b$ or $\b\prec\a$, if each element of $\b$ is a union of elements of $\a$. $\a \succ \b$ is equivalent to $\sigma(\b)\subseteq \sigma(\a)$, where $\sigma(\mathcal{A})$ is the $\sigma$ algebra generated by the family $\mathcal{A}$.

\medskip

Let $\a$ and $\b$ be two partitions. Their {\em join} is the partition $\a\vee\b=\{A\cap B:A\in \a, B\in \b\}$ and extend this definition naturally to a finite number of partitions. For $m\le n$, define
$$\a_{m}^n=\bigvee_{i=m}^nT^{-i}\a=T^{-m}\a\vee T^{-m+1}\a \vee\ldots\vee T^{-n}\a ,$$
where $T^{-i}\a=\{T^{-i}A: A\in \a\}$.

\subsection{Symbolic representation}
Let $(X,\X,\mu,T)$ be an ergodic system and $\a=\{A_j\}_{1\leq j\leq l}$ a finite
partition (we usually assume $\mu(A_j)>0$ for all $j$). We sometimes think of the partition $\a$
as a function $\xi_0:X\rightarrow \Sigma=\{1,2,\ldots,l\}$ defined by $\xi_0(x)=j$ for $x\in A_j$.
The pair $(X,\a)$ is traditionally called a {\em process}. Let $\Omega=\Omega(l)=\{1,2,\ldots,l\}^\Z$
and let S be the shift. One can define a homomorphism $\phi_\a$ from $X$ to $\Omega$, given by $\phi_\a(x)=\omega\in\Omega$, where
\[\omega_n=\xi_n(x)=\xi_0(T^nx).\]
We denote the distribution of the stochastic process, $(\phi_\a)_*(\mu)$, by $\rho=\rho(X,\a)$ and
call it the {\em symbolic representation measure } of $(X,\a)$. Let
$$X_\a={\rm supp}(\phi_\a)_*\mu={\rm supp} \rho.$$
Then we get a homomorphism $\phi_\a: (X,\X,\mu,T)\rightarrow (X_\a, \X_\a, \rho, S)$.
This homomorphism is called the {\em symbolic representation} of the process $(X,\a)$.
This will not be a model for $(X,\X,\mu,T)$ unless $\bigvee_{i=-\infty}^\infty T^{-i}\a=\X$
modulo null sets, but in any case this does give a model for a non-trivial factor of $X$.

\medskip



\subsection{Copying names}

An important way to produce partitions is by copying or painting names on towers.
If $\mathfrak{c}=\{T^jB\}_{j=0}^{N-1}$ is a column and ${\bf a}\in \Sigma^N$ then
{\em copying the name ${\bf a}$ on the column $\mathfrak{c}$} means that on
$|\mathfrak{c}|=\bigcup_{j=0}^{N-1}T^jB$ we define a partition (may not be on the whole space)  by letting
\begin{equation*}
A_k=\bigcup\{T^jB:{\bf a}_j=k\}, \quad k\in \Sigma=\{1,2,\ldots,l\}.
\end{equation*}
If there is a tower $\mathfrak{t}$ with $q$ columns $\mathfrak{c}_i=\{T^jB_i\}_{j=0}^{N_i-1}$, and
$q$ names ${\bf a}(i)\in \Sigma^{N_i}, i=1,\ldots,q$, then copying these names on $\mathfrak{t}$
means we copy each name ${\bf a}(i)$ on column $\mathfrak{c}_i$, i.e. we define a partition on $|\mathfrak{t}|$ by
\begin{equation*}
   A_k=\bigcup\{T^jB_i: {\bf a}(i)_j=k, i=1,\ldots, q\}, \quad k\in \Sigma.
\end{equation*}
These partitions can be extended to a partition $\a=\{A_1, \ldots, A_l\}$ of the whole space by
 assigning, for example, the value 1 to the rest of the space. Note that we will do this
 in the sequel.

\subsection{A metric on partitions}

For the set of all finite partitions with the same cardinality, there is a complete metric.

\begin{de}
Let $(X,\X,\mu, T)$ be a system. Let $\a=\{A_1,\ldots,A_l\}$ and $\b=\{B_1,\ldots, B_l\}$ be two $l$-set partitions (\(l\ge 2\)), define
\[d(\alpha,\beta)=\mu(\a\D \beta)=\frac 12\sum_{j=1}^l\mu(A_j\Delta B_j).\]
\end{de}

Note that $d(\alpha,\beta)$ will be different when the partitions are indexed in different ways.

\section{An improvement of a technical lemma in Weiss's proof of the Jewett-Krieger' Theorem}\label{section-tech}
In this section we will prove a lemma which is an improvement of a technical lemma
in Weiss's proof of the Jewett-Krieger's Theorem. Using this lemma one may simplify
Weiss's arguments in some sense.


In Weiss's new proof of the Jewett-Krieger' theorem \cite{Weiss85, Weiss00, Glasner} and
in the proof of Weiss' theorem on the doubly minimal model \cite{Weiss95},
one needs the following technical lemma to get a new K-R tower from a given one:

\begin{lem}\cite{Glasner,GW06,Weiss00}\label{KR extra tower}
Let $(X,\B,\mu,T)$ be a non-periodic ergodic system, and let \(\textbf{t}(C_0)\) be a
K-R tower (i.e.  $\max r_{C_0}<\infty$). Then for all $N$ sufficiently large, there exists a set $C_1\subset C_0$ such that
$$N\le r_{C_1}(y)\le N+ 4\max r_{C_0}, \ \forall y\in C_1.$$
That is, the corresponding K-R tower \(\textbf{t}(C_1)\) satisfies ${\rm range}\ r_{C_1}\subset[N,N+4\max r_{C_0}]$.
\end{lem}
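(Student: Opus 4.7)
Let $M=\max r_{C_0}$. The goal is to produce a positive-measure $C_1\subset C_0$ for which consecutive $T$-orbit visits to $C_1$ are separated by a $T$-time in $[N,N+4M]$, so $\mathfrak{t}(C_1)$ has column heights in the prescribed window. The plan is to pass to the first-return map $S\colon C_0\to C_0$, $Sy=T^{r_{C_0}(y)}y$, which preserves $\nu=\mu|_{C_0}/\mu(C_0)$ and is ergodic and non-periodic; the cocycle $r_{C_0}$ is bounded by $M$. Setting $\sigma_n(y)=\sum_{i=0}^{n-1}r_{C_0}(S^iy)$, the increments $\sigma_{n+1}-\sigma_n$ lie in $[1,M]$.

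For each $y\in C_0$ I define greedy \emph{milestone} indices $k_0(y)=0$ and $k_{j+1}(y)=\min\{k>k_j(y):\sigma_k(y)-\sigma_{k_j(y)}(y)\ge N\}$. By the bounded-increment property, each milestone $T$-gap $\sigma_{k_{j+1}(y)}(y)-\sigma_{k_j(y)}(y)$ lies in $[N,N+M-1]\subset[N,N+4M]$. If these milestones can be collected into a measurable positive-measure $C_1$ in a consistent way, the desired return-time bound will hold automatically for consecutive milestones along the same $S$-orbit.

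To realize this measurably I apply Rokhlin's Lemma to $(C_0,\nu,S)$ with $S$-height $H\gg N$ and a tiny error $\epsilon'$, producing a base $D\subset C_0$ with $\{S^iD\}_{i=0}^{H-1}$ pairwise disjoint and of $\nu$-measure exceeding $1-\epsilon'$. For each $y\in D$ and each $j$ with $k_j(y),k_{j+1}(y)\in[0,H-1]$, I mark $S^{k_j(y)}y$ as an \emph{interior} element of $C_1$; its next $T$-orbit visit to $C_1$ is then the subsequent milestone $S^{k_{j+1}(y)}y\in C_1$, at $T$-distance in $[N,N+M-1]$, as required.

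The main obstacle is the boundary at the top of each $S$-sheet over $D$: the last interior milestone has no valid successor inside the sheet, and the orbit can either enter the small error region of the Rokhlin tower or cross into a neighboring sheet at a position misaligned with that sheet's milestones. To force \emph{every} $y\in C_1$ (not merely $\nu$-almost every) to satisfy the return-time bound, I would overlay two Alpern castles on $(C_0,S)$ with coprime $S$-heights $H$ and $H+1$ so the resulting partition of $C_0$ is exhaustive, and then adjust the final/initial marked points at each sheet splice so that the splice-crossing $T$-distance lies in $[N,N+4M]$. The worst-case adjustment can shift a marked point by up to $M$ on either side of the splice, which, combined with the natural overshoot of $M-1$, is absorbed within the $4M$ slack in the statement---precisely why the lemma reads $+4M$ rather than $+M$. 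A final counting argument shows $\mu(C_1)>0$ for $H\gg N$, yielding the required K-R tower $\mathfrak{t}(C_1)$ with $\mathrm{range}\,r_{C_1}\subset[N,N+4M]$.
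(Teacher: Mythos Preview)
The paper does not actually prove this lemma; it is quoted from \cite{Glasner,GW06,Weiss00}. The argument those references use is, however, visible inside the paper's proof of Lemma~\ref{KR extra prime tower} (Step~1): one first finds $\hat B\subset C_0$ with all $T$-return times very large, then cuts each column of $\mathfrak t(\hat B)$ into consecutive blocks of the two coprime $T$-lengths $N+2M$ and $N+2M+1$ (with $M=\max r_{C_0}$), and finally slides every block boundary to the nearest level lying in $C_0$. Each slide moves a boundary by at most $M-1$, so every block ends up with $T$-length in $[N+2M-2(M-1),\,N+2M+1+2(M-1)]\subset[N,N+4M]$. This is exactly where the $4M$ comes from.

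Your route via greedy $T$-milestones over an $S$-Alpern castle is a different idea, and the splice step does not close with the budget you claim. Inside a single $S$-column the greedy gaps lie in $[N,N+M-1]$, but at a splice the leftover $T$-distance $g$ from the last milestone of one column to the base of the next can be any value in $[1,N-1]$ (take $r_{C_0}\equiv 1$ to see this: milestones sit at multiples of $N$, and the residue of $H$ modulo $N$ controls $g$). Merging such a leftover with an adjacent interval produces a gap as large as $2N+M-2$, and no shift of a marked point by $O(M)$ can repair a defect of order $N$; your sentence ``the worst-case adjustment can shift a marked point by up to $M$ on either side \ldots absorbed within the $4M$ slack'' is therefore not justified. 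The fix is precisely the coprime-block device above: cut in $T$-time with lengths $N+2M$ and $N+2M+1$ (e.g.\ via Alpern in $(X,T)$) and then push each cut to the nearest $C_0$-level, so that the only adjustments really are $O(M)$.
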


When one uses this lemma, one hopes that the column heights of the K-R tower are relatively prime, which is not
guaranteed in Lemma \ref{KR extra tower}. Hence in Weiss's new proof of the Jewett-Krieger's theorem, one
first assumes that the system has no rational spectrum, in which case automatically the column heights
of every K-R tower are relatively prime. Then one deals with other cases.
The following lemma will avoid this kind of discussion. 

\begin{lem}\label{KR extra prime tower}
Let $(X,\B,\mu,T)$ be a non-periodic ergodic system. Let $\mathfrak{t}'$ be a K-R tower with
bases $C_i$ and heights $h_i$, $1\le i\le k$, and let $N=\max_i\{h_i\}$ and $C=\bigcup_{i=1}^kC_i$.
Assume that $h_1,h_2,\ldots,h_k$ are relatively prime. Then for any $n$ large enough,
there is a K-R tower $\mathfrak{t}$ with base $D$ such that:
\begin{enumerate}
     \item $D\subset C$;
     \item $r_D(y)\in [n,n+6N], \forall y\in D$;
     \item the column heights of $\mathfrak{t}(D)$ are relatively prime.
\end{enumerate}
\end{lem}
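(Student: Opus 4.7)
The strategy is first to apply Lemma \ref{KR extra tower} to obtain an initial K-R tower whose heights lie in $[n+N,n+5N]$, and then, if the gcd of those heights still exceeds $1$, to perform a ``shift-by-$h_i$'' modification for each $i\in\{1,\ldots,k\}$ in order to introduce new column heights of the form $M_k-h_i$. This forces the gcd down to $\gcd(h_1,\ldots,h_k)=1$ while keeping every return time inside $[n,n+6N]$.

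More precisely, for $n$ large enough, apply Lemma \ref{KR extra tower} to $\mathfrak{t}'$ (whose maximum column height is $N$) with parameter $n+N$: this gives $D_0\subset C$ with $r_{D_0}(y)\in[n+N,n+5N]$ for every $y\in D_0$. Let $M_1,\ldots,M_s$ be the column heights of $\mathfrak{t}(D_0)$ and $d:=\gcd(M_1,\ldots,M_s)$. If $d=1$ we are done by setting $D:=D_0$. Otherwise, using ergodicity of the induced map $T_C$ on $C$ (and refining $D_0$ if necessary), we may assume $\mu(D_0\cap C_i)>0$ for every $i$. For each $i\in\{1,\ldots,k\}$ pick a positive-measure set $B_i^*\subset D_0\cap C_i$, arranging by shrinking that $B_1^*,\ldots,B_k^*$, their images $T^{h_1}B_1^*,\ldots,T^{h_k}B_k^*$, and $D_0\setminus\bigsqcup_j B_j^*$ are all pairwise disjoint. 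Since $B_i^*\subset C_i$, we have $T^{h_i}B_i^*\subset T^{h_i}C_i\subset C$. Put
\[
D:=\Bigl(D_0\setminus\bigsqcup_{i=1}^{k}B_i^*\Bigr)\cup\bigsqcup_{i=1}^{k}T^{h_i}B_i^* \ \subset\ C.
\]

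A direct orbit analysis of $\mathfrak{t}(D)$ shows that every $r_D(y)$ is one of $M_k$, $M_k+h_j$, $M_k-h_i$, or $M_k-h_i+h_j$, hence lies in $[M_k-N,\,M_k+N]\subset[n,n+6N]$, giving condition (2). Moreover, the column heights of $\mathfrak{t}(D)$ contain $M_k$ and $M_k-h_i$ for each $i$, so any common divisor of the heights must divide $h_i$ for every $i$ and therefore divides $\gcd(h_1,\ldots,h_k)=1$; this yields condition (3).

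The main obstacle is the refinement step: ensuring that $\mu(D_0\cap C_i)>0$ for every $i$ so that each $B_i^*$ can be chosen, and arranging the disjointness of the $B_i^*$'s together with their images $T^{h_i}B_i^*$. Both issues are handled by the ergodicity of $T_C$ on $C$ combined with a shrinking/pigeonhole argument. The specific bound $[n,n+6N]$ rather than something looser is matched to the initial parameter $n+N$ in Lemma \ref{KR extra tower}, providing $N$ units of slack on each side to absorb the $\pm h_i$ shifts introduced by the modification.
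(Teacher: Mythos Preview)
Your overall strategy matches the paper's: build an intermediate K-R tower with base inside $C$ and heights in $[n+N,n+5N]$, then shift small pieces sitting in each $C_i$ forward by $h_i$ so that both some height $M$ and $M-h_i$ occur, forcing the gcd of the heights to divide every $h_i$ and hence equal $1$. Your orbit analysis for the return-time range (2) is correct.

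The real gap is in your first step. Lemma~\ref{KR extra tower} produces $D_0\subset C$ with $r_{D_0}\in[n+N,n+5N]$, but it says nothing about $\mu(D_0\cap C_i)$; there is no reason this is positive for every $i$. Your appeal to ``ergodicity of $T_C$ and refining $D_0$ if necessary'' does not fix this: ergodicity of $T_C$ only tells you that some iterate $T_C^{m}(D_0)$ meets $C_i$, but replacing a piece of $D_0$ by its $T_C^{m}$-image moves it up by as much as $mN$ levels in $\mathfrak t(D_0)$, and nothing bounds $m$, so the height window is destroyed. This is precisely why the paper does \emph{not} invoke Lemma~\ref{KR extra tower} at all. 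Its Step~1 is an independent and rather elaborate construction: starting from a thin Rokhlin column of height $>20(n+3N)^2$ and measure $<\min_i\mu(C_i)/(10k(n+3N)^2+k)$, one inductively picks levels $n_1<\cdots<n_k$ (each at least $10(n+3N)^2$ apart) and sets $B_j\subset C_{d_j}$ so that $\hat B=\bigcup B_j$ meets every $C_i$ with positive measure \emph{and} has all return times $\ge 10(n+3N)^2$; only then are the columns of $\mathfrak t(\hat B)$ chopped into blocks of sizes $n+3N$ and $n+3N+1$ and each block's base slid to the nearest level in $C$, producing $\hat D$ with both the height range and $\mu(\hat D\cap C_i)>0$. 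That construction is the technical heart of the lemma, and your outline replaces it with a sentence.

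A secondary omission: in the modification step you must control the measures of the $B_i^*$ (as the paper does with its sets $F_i$, taking $2\mu(F_i)\le\mu(F_{i+1})<\epsilon_0/2^{k-i+1}$ and $F_{i+1}$ disjoint from $\bigcup_{j\le i}T^{\hat h_j}F_j$) so that for each $i$ \emph{both} heights $\hat h_i$ and $\hat h_i-h_i$ genuinely occur in $\mathfrak t(D)$ on sets of positive measure. Listing the possible values of $r_D$ is not the same as showing each $M-h_i$ is realised.
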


\begin{proof}
First we will find a set $\hat{D}\subset C$ with the following two properties:

(i) $n+N\le r_{\hat{D}}(y)\le n+5N, \forall y\in \hat{D}$ and

(ii) $\mu(\hat{D}\cap C_i)>0$ for each $1\leq i\leq k$.

Then according to the second property of $\h{D}$, we adjust some part of $\h{D}$ to get $D$
such that the column heights of $\mathfrak{t}(D)$ are relatively prime.

\medskip

\noindent {\bf Step 1: The construction of $\hat{D}$.} Now we describe how to get $\h{D}$.
To that aim, we first construct a Kakutani tower
$\mathfrak{t}(\h{B})$ with height larger than $10(n+3N)^2$ and $\mu(\hat{B}\cap C_i)>0$
for each $i$. But at this point we may have $\max r_{\h{B}}=\infty$ (i.e. $\mathfrak{t}(\h{B})$
may not be a K-R tower). So we need to modify it such that the resulting tower is a K-R tower $\mathfrak{t}(\h{D})$.

\medskip

By Rokhlin Lemma, there is a $B\subset C$ such that the Rokhlin tower
$$\mathfrak{c} = \{B, TB, \ldots, T^{M-1}B \}$$
 satisfies that $M>20(n+3N)^2$ and $\mu(B)<\frac{\min_i\{\mu(C_i)\}}{10k(n+3N)^2+k}$.

Let $n_0=0$. Now find the smallest $n_1\in \N$ with
\begin{description}
  \item[($a_1$)] $n_1-n_0\geq 10(n+3N)^2$;
  \item[($b_1$)] $\mu(T^{n_1}B\cap (\bigcup_{j=1}^k C_j\setminus (\bigcup_{j=0}^{10(n+3N)^2}T^jB)))>0$.
\end{description}
Hence there is some $d_1\in \{1,2,\ldots,k\}$ such that
$$\mu(T^{n_1}B\cap( C_{d_1}\setminus (\cup_{j=0}^{10(n+3N)^2}T^jB)))>0.$$
Let $$B_1=T^{n_1}B\cap( C_{d_1}\setminus (\cup_{j=0}^{10(n+3N)^2}T^jB)).$$
Inductively, assume that for $1\le i\le k-1$ we have obtained $n_1,\ldots n_{i}$,
distinct numbers $d_1,\ldots,d_i\in \{1,\ldots,k\}$ and measurable sets $B_1, \ldots, B_i$.

Let $n_{i+1}$ be the smallest natural number satisfying:
\begin{description}
  \item[($a_{i+1}$)] $n_{i+1}-n_i\geq 10(n+3N)^2$;
  \item[($b_{i+1}$)] $\mu(T^{n_{i+1}}B\cap (\bigcup_{j=1}^k C_j\setminus (\bigcup_{j=1}^i
  C_{d_i}\cup\bigcup_{s=0}^i \bigcup_{j=n_s}^{n_s+10(n+3N)^2}T^jB)))>0$.
\end{description}
Hence there is some $d_{i+1}\in \{1,2,\ldots,k\}\setminus \{d_1,\ldots,d_i\}$ such that
$$\mu(T^{n_{i+1}}B\cap (C_{d_{i+1}}\setminus (\bigcup_{j=1}^i C_{d_i}\cup \bigcup_{s=0}^i
\bigcup_{j=n_s}^{n_s+10(n+3N)^2}T^jB)))>0.$$ Let
$$B_{i+1}=T^{n_{i+1}}B\cap (C_{d_{i+1}}\setminus (\bigcup_{j=1}^i C_{d_i}\cup\bigcup_{s=0}^i
\bigcup_{j=n_s}^{n_s+10(n+3N)^2}T^jB)).$$ Note that $B_{i+1}=T^{n_{i+1}}B\cap (C_{d_{i+1}}\setminus
(\bigcup_{s=0}^i \bigcup_{j=n_s}^{n_s+10(n+3N)^2}T^jB))$. This inductive process can be done
for $i=2,3,\ldots,k$ since $\mu(B)<\frac{\min_i\{\mu(C_i)\}}{10k(n+3N)^2+k}$, which means
$\mu(\bigcup_{i=0}^{k-1}\bigcup_{j=0}^{10(n+3N)^2}T^{n_i+j}B)<\mu(C_s)$, $1\leq s\leq k$.

\medskip

Now by induction we obtain subsets $B_1,\ldots,B_k$. Let $\hat{B}=\bigcup_{i=1}^kB_i$. We claim that:

\medskip
\noindent {\bf the height of each column in the Kakutani tower $\mathfrak{t}(\hat{B})$ is larger than
$10(n+3N)^2$, i.e. $r_{\h{B}}(y)\ge 10(n+3N)^2, \forall y\in \h{B}$.}

\medskip

To prove the claim, we need to prove that for any $l>0$ and $1\leq u,v \leq k$,
$\mu(T^lB_u\cap B_v)>0 $ implies $l\geq 10(n+3N)^2$. Since $\mu(T^lB_u\cap B_v)>0 $,
 there is a subset $P\subset T^lB_u\cap B_v$ with positive measure. If $u=v$,
 then $l\geq 10(n+3N)^2$ since $B_u\subset B$. If $u<v$, then $l\geq 10(n+3N)^2$
 since $T^{-l}P\subset B_u\subset B,\;P\subset B_v$ and $\mu((\bigcup_{j=n_u}^{n_u+10(n+3N)^2}T^jB)
 \cap B_v)=0$. Finally assume $u>v$. Since $n_u$ is the first number satisfing the inductive
 condition ($a_u$), we have $\mu(\bigcup_{j=n_{v-1}+10(n+3N)^2+1}^{n_v-1}T^j B\cap B_u)=0$.
 We also have $\mu(\bigcup_{n_{v-1}}^{n_{v-1}+10(n+3N)^2}B\cap B_u)=0$, so
 $\mu((\bigcup_{j=n_{v-1}}^{n_v-1}T^jB)\cap B_u)=0$. Since $n_u-n_{u-1}\geq 10(n+3N)^2$
 and $T^{-l}P\subset B_u\cap T^{-l}B_v$, we conclude that $l\geq 10(n+3N)^2$.

\medskip

By the construction we also see that $\mu(\hat{B}\cap C_i)>0$ for each $i\in \{1,\ldots, k\}$.
Since $n+3N$ and $n+3N+1$ are relatively prime, we may partition each column of $\mathfrak{t}(\hat{B})$
into blocks of sizes $n+3N$ and $n+3N+1$. And then we move the base level of each block to the nearest
level that belongs to $C$. Collect the union of the base level and $\hat{B}$, and we get a set $\hat{D}\subset C$ satisfying

\medskip

\begin{description}
  \item[$(I)$]  The height of $\mathfrak{t}(\hat{D})$ ranges in $[n+N,n+5N]$;
  \item[$(II)$]  $\mu(\hat{D}\cap C_i)>0$ for each $1\leq i\leq k$.
\end{description}
The set of heights of $\mathfrak{t}(\h{D})$ may not be relatively prime, and we need modify it to what we need.
\medskip

\noindent {\bf Step 2: The construction of ${D}$.} For each $i\in \{1,\ldots,k\}$,
let $E_i\subset \hat{D}\cap C_i$ be a measurable subset with positive measure.
Then we get $k$ sets $E_1,E_2,\ldots,E_k$ with the
corresponding heights $\hat{h}_1,\hat{h}_2,\ldots,\hat{h}_k$ respectively. Since $T$ is
non-periodic and ergodic, $\mu(E_i\setminus T^{\hat{h}_i}E_i)>0$ for each $i$.
Let $\ep_0=\min_i\{\mu(E_i\setminus T^{\hat{h}_i}E_i)\}$.

Let $F_1\subset E_1\setminus T^{\hat{h}_1}E_1$ be a subset satisfying
$0<\mu(F_1)<\frac{\ep_0}{2^{k+1}}$. Inductively assume
for $1\le i\le k-1$ we have constructed subsets $F_1,\ldots,F_i$ satisfying
$2\mu(F_j)\leq\mu(F_{j+1})<\frac{\ep_0}{2^{k-j+1}}$ for each $1\le j\le i-1$.
Note that
\begin{equation}\label{sum}
    \sum_{j=1}^i\mu(F_j)\le \sum_{j=1}^i\frac{\ep_0}{2^{k-j+2}}=\frac{\ep_0}{2^{k-i+2}}\frac{1-1/2^i}{1-1/2}<\frac{\ep_0}{2^{k-i+1}}.
\end{equation}
Thus $\mu((E_{i+1}\setminus T^{\hat{h}_{i+1}}E_{i+1})\setminus (\bigcup_{j=1}^i T^{\hat{h}_j}F_j ))>\ep_0(1-\frac{1}{2^{k-i+1}})$.
Hence one can find
$$F_{i+1}\subset (E_{i+1}\setminus T^{\hat{h}_{i+1}}E_{i+1})\setminus (\bigcup_{j=1}^i T^{\hat{h}_j}F_j )$$ satisfying $2\mu(F_i)\leq\mu(F_{i+1})<\frac{\ep_0}{2^{k-i+1}}$.

\medskip

In such a way by induction we get $k$ sets $F_1,F_2,\ldots,F_k$.
For each $i$, we have the following properties:

\medskip

\begin{description}
 \item[($i$)] $F_i \subset C_i$, which implies $T^{h_i}F_i \subset C$;
 \item[($ii$)] $F_i \subset E_i\setminus T^{\hat{h}_i}E_i$, which implies $T^{\hat{h}_i} F_i \subset \hat{D}\setminus F_i$;
 \item[($iii$)] For $j\ge i$, $T^{\hat{h}_i}F_i\cap F_j=\varnothing$ and $\mu(T^{\hat{h}_i} F_i)=\mu(F_i)>\Sigma_{s=1}^{i-1} \mu(F_s)$.
 \item[($iv$)] $\mu(T^{\hat{h}_i}F_i \cap (\hat{D}\setminus (\bigcup_{j=1}^k F_i)))>0$.
 \end{description}

Note (i), (ii) and the first part of (iii) follow from the definition of $F_i$. The second part of (iii)
follows from the inequality $\mu(F_{j+1})\ge 2\mu(F_j)$, i.e.
$$\mu(T^{\hat{h}_i} F_i)=\mu(F_i)\ge 2\mu(F_{i-1})\ge \mu(F_{i-1})+2\mu(F_{i-2})\ge \ldots >\Sigma_{s=1}^{i-1} \mu(F_s). $$
And (iv) is deduced from (iii) readily.

\medskip

Finally we put $D=(\hat{D}\setminus(\bigcup_{i=1}^k F_i ))\cup (\bigcup_{i=1}^k T^{{h}_i} F_i)$.
By the properties of $\{F_i\}$, we conclude:

\medskip

 \begin{enumerate}
   \item $D\subset C$.
   \item $\mathfrak{t}(D)$ is a K-R tower, and the height of $\mathfrak{t}(D)$ ranges in $[n,n+6N]$.
   \item The collection of the column heights of the K-R tower $\mathfrak{t}(D)$ contains
   $\{\hat{h}_i,\hat{h}_i-h_i\}_{i=1}^k$, which are relatively prime since $\{h_i\}_{i=1}^k$ are relatively prime.
 \end{enumerate}

(1) is followed by the definition of $D$, and (2) is from $(i)$ above. By $(iv)$ and the
definition of $D$ for each $i\in \{1,\ldots,k\}$ there is some column of $\mathfrak{t}(D)$
with height $\hat{h}_i-h_i$. By (\ref{sum}), we have that $\frac{\mu(E_i)}{2}\ge \frac{\ep_0}{2}>\Sigma_{i=1}^k \mu(F_i)$,
which implies that for each $i\in \{1,\ldots,k\}$ there is some column of $\mathfrak{t}(D)$ with height $\hat{h}_i$.
Hence we have (3).

\medskip

The tower $\mathfrak{t}(D)$ is as required. The proof is completed.
\end{proof}

\section{Proof of Theorem \ref{main-result}-(1)}

A subset $S$ of $\Z_+$ is {\it syndetic} if it has a bounded gaps,
i.e. there is $N\in \N$ such that $\{i,i+1,\cdots,i+N\} \cap S \neq
\emptyset$ for every $i \in {\Z}_{+}$. $S$ is {\it thick} if it
contains arbitrarily long runs of positive integers, i.e. there is a
strictly increasing subsequence $\{n_i\}$ of $\Z_+$ such that
$S\supset \bigcup_{i=1}^\infty \{n_i, n_i+1, \ldots, n_i+i\}$.
Some dynamical properties can be interrupted by using the notions of
syndetic or thick subsets. For example, a classic result of
Gottschalk and Hedlund \cite{GH} stated that $x$ is a minimal point if and only if
$$N(x,U)=\{n\in\Z_+: T^nx\in U\}$$ is syndetic for any neighborhood $U$ of $x$,
and by Furstenberg \cite{F67} a topological system
$(X,T)$ is weakly mixing if and only if
$$N(U,V)=\{n\in\Z_+: U\cap T^{-n}V\neq \emptyset \}$$ is thick for any
non-empty open subsets $U,V$ of $X$.

\medskip

A set $S$ is called {\em thickly syndetic} if for every $N$ the positions where length $N$ runs begin form a syndetic set.
A subset $S$ of $\Z_+$ is {\it piecewise syndetic} if it is an
intersection of a syndetic set with a thick set. It is known that a topological system
$(X,T)$ is an $M$-{\it system} (i.e. the set of minimal point of $(X,T)$ is dense) if and only if there is a transitive point $x$ such that $N(x,U)$ is piecewise syndetic for any neighborhood $U$ of
$x$ (see for example \cite[Lemma 2.1]{HY}). We will use this fact in the sequel.

To prove Theorem \ref{main-result}-(1), we begin with the following observation.


\begin{lem}\label{infinite}
Let $(X,\X,\mu,T)$ be a non-periodic ergodic system. Then there is a tower whose set of the column heights is infinite.
\end{lem}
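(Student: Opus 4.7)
My plan is to reduce the lemma to finding a measurable set $B \subset X$ of positive measure whose first-return-time function $r_B \colon B \to \N$ attains infinitely many values. Given such a $B$, the Kakutani tower $\mathfrak{t}(B) = \{\mathfrak{c}_k : k \ge 1\}$ with bases $B_k = \{x \in B : r_B(x) = k\}$ automatically consists of pairwise disjoint columns, and its set of column heights $\{k : \mu(B_k) > 0\}$ is exactly the set of values realized by $r_B$, which is infinite by hypothesis on $B$.

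To construct such a $B$, I would combine Rokhlin's lemma with the non-atomicity of $\mu$ (a consequence of non-periodicity). For each $n \in \N$, Rokhlin's lemma supplies a set $A_n$ with $A_n, TA_n, \ldots, T^{n-1}A_n$ pairwise disjoint; the measure $\mu(A_n)$ may be shrunk arbitrarily by passing to subsets. The $n$-step disjointness already guarantees $r_{A_n}(x) \ge n$ for $x \in A_n$. Using that any $T$-translate $T^{k}A_n$ is again a Rokhlin base of height $n$ and that by ergodicity such translates cover $X$, I can reposition each $A_n$ so its column avoids the previously constructed pieces.

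I would then choose subsets $D_n \subset T^{k_n}A_n$ of positive measure inductively so that, writing $B := \bigcup_m D_m$: (i) $\{T^j D_n\}_{j=1}^{n-1}$ is disjoint from $\bigcup_{m<n} D_m$, ensuring no return of a point of $D_n$ to $B$ earlier than time $n$; and (ii) a positive-measure subset of $D_n$ satisfies $T^n x \in B$, so that $r_B = n$ there. Condition (i) is attainable because $\mu\!\bigl(\bigcup_{j=1}^{n-1}T^{-j}\bigl(\bigcup_{m<n}D_m\bigr)\bigr) \le (n-1)\sum_{m<n}\mu(D_m)$, which by taking $\mu(D_m)$ small and choosing a suitable $T$-translate of $A_n$ (existing by ergodicity) can be made to leave a positive-measure set inside $A_n$ for $D_n$ to live in. Condition (ii) can be secured by a compatible choice of $D_n$ (or by enlarging $B$ by a small set containing a portion of $T^n D_n$).

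The main obstacle is the measure bookkeeping: the carrier $\bigcup_{m<n}|\mathfrak{c}_m|$ grows with $n$, so the inductive constraints $(n-1)\sum_{m<n}\mu(D_m) < \mu(A_n)$ tighten. This forces the $\mu(D_m)$ to be chosen very small (decreasing faster than any fixed summable rate, with the admissible rate determined by the heights $n_k$ we wish to realize). Non-atomicity gives the freedom to do so, and a diagonal/greedy scheme then produces $D_n$ for infinitely many $n$; the resulting $B$ has $r_B$ attaining infinitely many distinct values, and $\mathfrak{t}(B)$ is the desired tower.
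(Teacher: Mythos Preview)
Your overall plan is viable and genuinely different from the paper's, but condition (i) as written has a gap.

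You claim that making $\{T^jD_n\}_{j=1}^{n-1}$ disjoint from $\bigcup_{m<n}D_m$ ensures no return of a point of $D_n$ to $B$ before time $n$. It does not: $B=\bigcup_m D_m$ also contains the pieces $D_m$ with $m>n$, built \emph{after} $D_n$, and nothing in (i) prevents some $T^jD_n$ (with $1\le j\le n-1$) from meeting such a later $D_m$. The repair is to impose the symmetric constraint at each later stage $m$, namely
\[
D_m\ \cap\ \bigcup_{n<m}\,\bigcup_{j=1}^{n-1}T^jD_n\ =\ \emptyset,
\]
which enlarges your forbidden set by at most $\sum_{n<m}(n-1)\mu(D_n)$ and is absorbed by the same rapid decay of $\mu(D_n)$ you already invoke. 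With this in place $r_B\ge n$ on every $D_n$, so $r_B$ is essentially unbounded and therefore (being $\N$-valued and a.e.\ finite) assumes infinitely many values; condition (ii) then becomes unnecessary and can be dropped.

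The paper's route is more direct. It takes the Kakutani tower over some base $C_1$; if its height set is already infinite, stop. Otherwise, if $h$ is the maximal height, the image under $T^h$ of that column's base lies back in $C_1$; removing from $C_1$ a tiny piece $E_1$ of this image forces the points whose return used to land in $E_1$ to overshoot, creating a strictly taller column, while $\mu(E_1)$ is taken small enough that every old height persists. Iterating with $\mu(E_k)$ shrinking geometrically yields $C=\bigcap_k C_k$ of positive measure whose Kakutani tower has infinitely many heights. Thus the paper \emph{thins} a single base to manufacture new heights, whereas you \emph{assemble} a base from many Rokhlin pieces; both succeed, but the paper sidesteps the two-sided disjointness bookkeeping your approach requires.
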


\begin{proof}
Given a tower with base $C_1$, if the set of column heights is infinite then we are done; or
we put it to be $\{h_1,\ldots,h_{n_1}\}$. Let $C_1^i$ be the corresponding column-base
with the height $h_i$, and we may assume that $h_1<\ldots < h_{n_1}$ (by putting the column-bases with the same
height together to form a new column-base). Choose a measurable set $E_1\subset T^{h_{n_1}}C_1^{n_1} $ such
that $0< \mu(E_1) < \frac 12\min_{1\leq i\leq n_1}\{\mu(C_1^i)\}$.

Let $C_2 = C_1\setminus E_1$ and we have a tower with base $C_2$. If the set of the column heights is
infinite then we are done. Or we have a bigger height set than the tower with base $C_1$, and
let it be $\{h_1,\ldots,h_{n_1},h_{n_1+1}\ldots,h_{n_2}\}$. Let $C_2^i$ be the corresponding
column-base with the height $h_i$, and we assume that $h_1<\ldots < h_{n_1}<\ldots < h_{n_2}$.
Choose a measurable set $E_2\subset T^{h_{n_2}}C_2^{n_2} $ such that $0< \mu(E_2) < \frac {1}{2^2}\min_{1\leq i\leq n_2}\{\mu(C_2^i)\}$.

Let $C_3 = C_2\setminus E_2$ and continue the process above. If after finite steps we get a tower
with infinitely many heights, then we are done. Or we will have a sequence of towers with deceasing
bases $\{C_k\}$, $n_1<\ldots<n_k$ and measurable sets $E_j$ with $0< \mu(E_j) < \frac {1}{2^j}\min_{1\leq i\leq n_j}\{\mu(C_j^i)\}$
for $1\le j\le k$ such that
$$\mu(C_{k+1})\ge \mu(C_{k})-\mu(E_k)>(1-\frac{1}{2^k})\mu(C_k)$$ for all $k\in \N.$

Let
$$C=\bigcap_{k=1}^\infty C_k$$
Then $\mu(C)>0$ and the tower with base $C$ has infinitely many heights. The proof is completed.
\end{proof}

We follow the standard procedure to prove Theorem \ref{main-result}-(1). Namely, first for a given partition
$\hat{\a}$ we construct a partition $\alpha$ close to $\hat{\alpha}$ such that the corresponding symbolic representation
$(X_\a,\X_\a,\rho,S)$ is a non-minimal topologically weakly mixing system with a dense set of minimal points. Then
we use the inverse limit by a more delicate argument. Finally we show the resulting system is the one which we need.

\begin{prop}\label{weak-nonminimal-dense minimal}
Let $(X,\X,\mu,T)$ be a non-periodic ergodic system and let $\hat{\a}$ be a finite partition of $X$. For each $\ep>0$, there is a partition $\a$ such that the corresponding symbolic representation $(X_\a,\X_\a,\rho,S)$ is a non-minimal topologically weakly mixing system with a dense set of minimal points, and
$$d(\a,\hat{\a})<\ep.$$
\end{prop}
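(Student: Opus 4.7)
The plan is to construct $\a$ by copying a carefully chosen family of names onto a very fine K-R tower produced by Lemma \ref{KR extra prime tower}, and then to read off the three topological properties of $(X_\a, S)$ from the characterizations recalled in the preliminaries. The lemma is used to secure simultaneously very large column heights and their relative primeness, which together permit the three competing structural requirements (weak mixing, dense minimal points, and non-minimality) to be encoded on a small portion of each fiber while the bulk of the fiber simply reproduces the $\hat\a$-name.

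First I would fix an integer $n$ (to be chosen large at the end) and apply Rokhlin's lemma together with Lemma \ref{KR extra prime tower} to obtain a K-R tower $\mathfrak{t}$ with base $D$, relatively prime column heights lying in $[n, n+6N]$, and $\mu(X\setminus|\mathfrak{t}|)<\ep/4$. After refining $\mathfrak{t}$ according to $\hat\a$, each fiber carries a canonical $\hat\a$-name. Next I would design the $\a$-name on each refined column as follows: on the bulk of the fiber copy the $\hat\a$-name verbatim; at the base of every column install a fixed marker word $W=w^\ell$ with $\ell$ large but $\ell/n<\ep/4$; in a short ``scrambling window'' of length $O(\log n)$ install, across the various sub-columns, an enumeration of all allowed initial/final block pairs of bounded length together with long runs $w^m$. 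Outside $|\mathfrak{t}|$ the partition is given an arbitrary default value. The distance bound $d(\a,\hat\a)<\ep$ is then by construction. Density of minimal points follows from the $M$-system characterization: the marker $W$ sits at the base of every column and so is visited by every $\rho$-generic point of $X_\a$ at syndetic times, and the scrambling windows upgrade this to piecewise syndetic visits to every cylinder of $X_\a$. Weak mixing follows from Furstenberg's criterion, because the scrambling windows realize every sufficiently large gap between any two allowed cylinders, making $N([A],[B])$ thick. Non-minimality follows because the long $w^m$-runs force the constant sequence $\cdots www\cdots$ into $X_\a$, whose singleton orbit closure is a proper closed invariant subset.

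The main obstacle is the combinatorial reconciliation of the three demands inside the $\ep$-budget: weak mixing wants arbitrary concatenation at every sufficiently large distance, dense minimal points wants a rigid marker at bounded gaps, and non-minimality wants a structured attractor. The hard part is organizing the scrambling windows across the many refined columns so that, using the relative primeness of the column heights supplied by Lemma \ref{KR extra prime tower}, a Chinese-remainder-type argument realizes every desired initial/final block pair at every required offset modulo the column heights, without disturbing the rigid placement of the marker or exceeding the distance budget. Once this bookkeeping is carried out, the verification of each topological property reduces to the syndetic/thick/piecewise-syndetic characterizations listed in the preliminaries.
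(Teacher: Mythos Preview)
Your plan has a genuine gap: you build $\a$ from a \emph{single} K-R tower with column heights in $[n,n+6N]$, a marker of fixed length $\ell$, and a scrambling window of length $O(\log n)$. But each of the three topological properties of $(X_\a,S)$ is a statement about \emph{all} cylinders, including those whose length exceeds every parameter you have fixed. Concretely, for non-minimality you want $w^\infty\in X_\a=\mathrm{supp}\,\rho$, which requires $w^k$ to occur with positive $\rho$-measure for every $k$; in your construction the longest $w$-run has length $\max(\ell,O(\log n))$, so $w^\infty\notin X_\a$. For weak mixing you must show $N([A],[B])$ is thick for every pair of non-empty cylinders; your scrambling window only enumerates block-pairs up to a fixed length, and for longer $A,B$ (which certainly appear, since the bulk of every fiber carries an unaltered $\hat\a$-name) the transitions are governed by the $\hat\a$-process itself, which may well fail to be weakly mixing (e.g.\ if $\hat\a$ generates a rotation factor). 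The Chinese-remainder idea does not rescue this: the sequence of columns a point traverses is dictated by $T$, not selectable, so relative primeness of the heights gives you no freedom to realize prescribed offsets between long blocks. The $M$-system verification has the same defect: syndetic returns to the marker $[W]$ do not imply piecewise-syndetic returns to $[w]_{-m}^{m}$ once $m$ exceeds what the scrambling window can encode.

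The paper's proof circumvents this by not using Lemma~\ref{KR extra prime tower} here at all. Instead it appeals to Lemma~\ref{infinite} to produce a tower with \emph{infinitely many} columns of distinct heights, and then performs an infinite sequence of modifications: for every $m\ge 1$ it selects a fresh batch of $m+1$ very thin columns (total measure $<\ep/2^m$), copying $(\w_{2i-1})^m 1^{\cdots}$ on $m$ of them (so that the repeated $\w_{2i-1}$ gives piecewise-syndetic recurrence at scale $m$, while the $1$-tails of unbounded length give non-minimality) and $\w_{2m}1^{\cdots}$ on the last one (so that every pair of $m$-blocks is joined at gap $m$, giving weak mixing at scale $m$). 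The limiting partition $\a$ then satisfies all three properties simultaneously because each scale $m$ has been handled by its own step. The essential point your sketch misses is that infinitely many independent modifications are needed to meet infinitely many scales; no single finite tower suffices.
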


\begin{proof}
By Lemma \ref{infinite}, there is a tower consisting of infinitely many columns with different heights.
Precisely, let $\mathfrak{t}(C)$ be a tower as in Lemma \ref{infinite} with columns $\{\mathfrak{c}_k:k\in \N\}$
and base $C=\bigcup_{k\in \N}C_k$.
Let $h_k$ be the height of column $\mathfrak{c}_k$, and assume that $h_1<h_2<\ldots$.
Note that for $k$ large enough $|\mathfrak{c}_k|$ will be very small. We will adjust some $|\mathfrak{c}_k|$ to get what we need.

Let $\a_0=\hat{\a}=\{\hat{A}_1, \ldots, \hat{A}_k\}$. For each $m\in \N$, let $\w_m = v_1v_2\ldots v_{k^m}$, where $$v_{_{a_1k^{m-1}+a_2k^{m-2}+\ldots+a_m}}=(a_1,a_2,\ldots,a_m),$$ for each $(a_1,\ldots,a_m)\in \{1,2,\ldots,k\}^m$.
That is, each $v_i$ is a word of length $m$ and $\w_m$ is a word which contains all
the $m$-name in $\{1,2,\ldots,k\}^m$. Note that $|\w_m|=mk^m$. 

\medskip

Before going on, let us recall the notion of copying a name on the column.
Let $\mathfrak{c}=\{T^jB\}_{j=0}^{h-1}$ be a column and ${\bf a}\in \{1,\ldots,k\}^N$ with $N\le h$. Then  copying the name ${\bf a}$ on the column $\mathfrak{c}$ means that we copy the name ${\bf a}$ on the first $N$ levels of $\mathfrak{c}$. That means, for the new partition $\{A_1,\ldots,A_k\}$ one has that
$$T^{i-1}B\subset {A}_{a_i},\;\;1\leq i \leq N ,$$ where ${\bf a}=(a_1,\ldots,a_{N})\in \{1,\ldots,k\}^{N}$.

\medskip
\noindent \textbf{Step 1:}
Since $\sum_k |\mathfrak{c}_k|<\infty$, there are columns $\mathfrak{c}_{n_1^1}, \mathfrak{c}_{n_1^2}$ such that $\mu(|\mathfrak{c}_{n_1^1}\cup\mathfrak{c}_{n_1^2}|)<\frac{\ep}{2}$ and $h_{n^2_1}>h_{n^1_1}>2 k^2$.
Let $\xi_{1_1^1}=\w_11^{h_{n^1_1}-k}\in \{1,\ldots,k\}^{h_{n_1^1}}$, where $1^{j}=(1,1,\ldots,1)$ with the length $j$. And let $\xi_{1_1^2}=\w_21^{h_{n^2_1}-k}\in \{1,\ldots,k\}^{h_{n_1^2}}$. For $i=1,2$, copy the name $\xi_{1_1^i}$ to the column $\mathfrak{c}_{n_1^i}$, and we get a partition $\a_1$. Note that $d(\a_0,\a_1)=d(\hat{\a},\a_1)<\frac{\ep}{2}$. The first step of adjustment is finished.

\medskip
\noindent \textbf{Step $m$:}
In general, for each $m\in \N$, choose columns $\mathfrak{c}_{n_m^1},\ldots, \mathfrak{c}_{n_m^{m+1}}$ such that $$\mu(\bigcup_{i=1}^{m+1}|\mathfrak{c}_{n_m^i}|)<\frac{\ep}{2^m},$$ and assume that $h_{n_m^{m+1}}
>\ldots >h_{n_m^1}>2m^2k^{2m}$. For $1\leq i\leq m$, let
$$\xi_{n_m^i}=\underbrace{\w_{2i-1}\w_{2i-1}\ldots\w_{2i-1} }_{m\ \text{times}}1^{h_{n_m^i}-m(2i-1)k^{2i-1}}=(\w_{2i-1})^i1^{h_{n_m^i}-m(2i-1)k^{2i-1}}\in \{1,\ldots,k\}^{h_{n_m^i}}.$$
And let
$$\xi_{n_m^{m+1}}=\w_{2m}1^{h_{n_m^{m+1}}-2mk^{2m}}\in \{1,\ldots,k\}^{h_{n_m^{m+1}}}.$$
Now for $1\leq i \leq m+1$, copy the name $\xi_{n_m^i}$ to the column $\mathfrak{c}_{n_m^i}$,
and we get a new partition $\a_m$. Note that $d(\a_{m-1},\a_m)<\frac{\ep}{2^m}$.

Moreover, note that we do
copying on the $m$ columns $\mathfrak{c}_{n_m^1},\ldots, \mathfrak{c}_{n_m^{m}}$ to make sure that the
symbolic representation of the resulting partition
is not minima but has a dense set of minimal points, and we do copying on $\mathfrak{c}_{n_m^{m+1}}$ to make sure
the symbolic representation of the resulting partition is weakly mixing. Of course we can do it on a single column,
but this will cause complication when dealing with the situation in Proposition \ref{increasing weak-nonminimal-dense minimal}.

\medskip

For all $m\in \N$, we make the above adjustment, and we obtain a new partition $\a=\{A_1, \ldots, A_k\}$.
It is clear that $$d(\hat{\a},\a)\le \sum_{i=1}^\infty d(\a_{i-1},\a_i)<\sum_{i=1}^{\infty}\frac{\ep}{2^i}=\ep.$$

\noindent \textbf{Properties of $\alpha$:}
Now we prove that $(X_\a,\X_\a,\rho,S)$ is non-minimal, weakly mixing, and the set of minimal points is dense.

\medskip

To show $(X_\a, S)$ is weakly mixing, it suffices to show that for each $m\in\N$,
$E_1,F_1,E_2,F_2\in\bigvee_{i=0}^{m-1}T^{-i}\a$ with positive measures, the following holds 
$$\mu\times \mu(E_1\times F_1 \cap (T\times T)^{-m} E_2\times F_2)>0.$$
This depends on the adjustment on $\mathfrak{c}_{n_m^{m+1}}$. 

Denote $\w_{2m}$ by $\w_{2m}=u_1u_2\ldots u_{k^{2m}}$, where $\{u_j\}_{j=1}^{k^{2m}}=\{1,2,\ldots, k\}^{2m}$.
Let the names of $E_1,E_2,F_1$ and $F_2$ be $e_1, e_2,
f_1, f_2\in \{1,\ldots,k\}^m$ respectively.  Then $e_1e_2=u_s$ and $f_1f_2=u_t$ for some $1\le t,s\le k^{2m}$. By the construction of $\a$, it follows that
$$T^{2m(s-1)}C_{n_m^{m+1}}\subset E_1\cap T^{-m}E_2, \ \text{and} \ T^{2m(t-1)}C_{n_m^{m+1}}\subset F_1\cap T^{-m}F_2 .$$ Thus,
$$(E_1\times F_1)\cap (T\times T)^{-m}(E_2\times F_2)\supset T^{2m(s-1)}C_{n_m^{m+1}}\times T^{2m(t-1)}C_{n_m^{m+1}}.$$
In particular, we conclude that
$$\mu\times \mu((E_1\times F_1)\cap (T\times T)^{-m}(E_2\times F_2))\ge \mu\times \mu ( T^{2m(s-1)}C_{n_m^{m+1}}\times T^{2m(t-1)}C_{n_m^{m+1}})>0.$$

\medskip

To see $(X_\a, S)$ is a non-minimal $M$-system, we show that each transitive point $w\in X_\a$ is
piecewise syndetically  but not syndetically recurrent. Let $x\in X$ such that $\phi_\a(x)=w$. It is easy to see that $w\neq 1^\infty$.

Let $w=(\ldots, a_{-2},a_{-1},a_0,a_1,a_2,\ldots)$. Then for each $m\in \N$,
$$[w]_{-m+1}^{m-1}=\{p\in X_\a: p[{-m+1},{m-1}]=(a_{-m+1},\ldots,a_0,a_1,\ldots,a_{m-1})\}$$ is a neighborhood of $w$.
Let $A\in\bigvee_{i=-m+1}^{m-1}T^{-i}\a$ with the name $(a_{-m+1},\ldots,a_{m-1})$. Since $w\neq 1^\infty$,
it is clear that when $m$ large enough we have $(a_{-m+1},\ldots,a_{m-1})\neq 1^{2m-1}$.

As defined before, $\w_{2m-1} = v_1v_2\ldots v_{k^{2m-1}}$, where $\{v_i\}_{i=1}^{k^{2m-1}}=\{1,\ldots,k\}^{2m-1}$.
Then  $(a_{-m+1},\ldots,a_{m-1})=v_r$ for some $r$. For each $j\geq m$, by the definition
of $x$, one can find $l_j$ such that $T^{l_j}x\in C_{n_j^m}$. By the construction of $\a$, for
$1\leq i\leq j$, $T^{i(2m-1)k^{2m-1}+(2m-1)(r-1)+m}C_{n_j^m}\subset A$. That means, for each $j\ge m$,
$$\{l_j+i(2m-1)k^{2m-1}+(2m-1)(r-1)+m\}_{1\leq i\leq j}\subset N(w,[w]_{-m+1}^{m-1}),$$ which implies
 $N(w, [w]_{-m+1}^{m-1})$ is piecewise syndetic.

 On the other hand, for each $j>m$ and
 $m-1<i<h_{n_j^m}-j(2m-1)k^{2m-1}$, $T^{l_j+j(2m-1)k^{2m-1}+i}x\in \bigcap_{d=-m+1}^{m-1}T^{-d} A_1$.
 As $(a_{-m+1},\ldots,a_{m-1})\neq 1^{m}$, we have that $\bigcap_{d=0}^{m-1}T^{-d} A_1\cap A=\emptyset$
 and hence $T^{l_j+j(2m-1)k^{2m-1}+i}x\not \in A$, which implies for each
 $j>m$,
 $$\{l_j+j(2m-1)k^{2m-1}+i\}_{m-1<i<h_{n_j^m}-j(2m-1)k^{2m-1}}\cap N(w, [w]_{-m+1}^{m-1})=\emptyset.$$
 Since $h_{n_j^m}-j(2m-1)k^{2m-1}\geq 2j^2k^{2j}-j(2m-1)k^{2m-1}$ and
 $$2j^2k^{2j}-j(2m-1)k^{2m-1}\stackrel {j\to \infty }{\longrightarrow }\infty,$$
 we conclude that $N(w, [w]_{-m+1}^{m-1})$ is not syndetic. The proof is completed.
\end{proof}

\medskip

To prove Theorem \ref{main-result}-(1), Proposition
\ref{weak-nonminimal-dense minimal} is not enough. In fact we need to get an increasing sequence of
required partitions $\gamma_n$ such that the inverse limit of the corresponding symbolic
representations is what we need. The following simple fact will used.

\begin{lem}\label{Fact}
Let $\a=\{A_1,\ldots, A_a\}, \a'=\{A'_1,\ldots, A'_a\},$ and $\b=\{B_1,\ldots,B_b\}$ be partitions
with $\a \succ \b$. Then there is a natural way to get a partition $\b'$ such that $\a'\succ \b'$.
Moreover, if $d(\a,\a')<\ep$, then we also have $d(\b,\b')<\ep$.
\end{lem}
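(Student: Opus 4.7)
The plan is to exploit the refinement $\a \succ \b$ combinatorially: since each $B_j$ is a union of atoms of $\a$, define an index partition $\{I_j\}_{j=1}^b$ of $\{1,\ldots,a\}$ by $I_j = \{i : A_i \subset B_j\}$. The natural construction of $\b'$ from $\a'$ is then simply to re-assemble the atoms of $\a'$ according to these same index sets, i.e.\ put $B'_j = \bigcup_{i \in I_j} A'_i$ for $1 \le j \le b$. Since $\{I_j\}$ partitions $\{1,\ldots,a\}$ and $\a'$ is itself a partition, it is immediate that $\b' = \{B'_1,\ldots,B'_b\}$ is a partition of $X$ and that $\a' \succ \b'$.

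For the metric estimate, the key point to check is the pointwise symmetric-difference inclusion
\[
B_j \triangle B'_j \;\subseteq\; \bigcup_{i \in I_j} (A_i \triangle A'_i).
\]
This is a one-line verification: if $x \in B_j \setminus B'_j$, then $x$ lies in some $A_i$ with $i \in I_j$ (because the $A_i$ with $i\in I_j$ cover $B_j$), and $x \notin A'_i$ (for otherwise $x$ would lie in $B'_j$); so $x \in A_i \triangle A'_i$. The reverse inclusion case is symmetric.

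Summing the inclusion over $j$, the index sets $I_j$ partition $\{1,\ldots,a\}$, so each pair $(A_i,A'_i)$ is counted exactly once, yielding
\[
\sum_{j=1}^b \mu(B_j \triangle B'_j) \;\le\; \sum_{j=1}^b \sum_{i\in I_j} \mu(A_i \triangle A'_i) \;=\; \sum_{i=1}^a \mu(A_i \triangle A'_i).
\]
Multiplying by $\tfrac12$ gives $d(\b,\b') \le d(\a,\a') < \ep$, as required. There is no real obstacle here; the only thing to get right is choosing the ``natural'' $\b'$ via the index sets $I_j$ rather than trying to rebuild it from $\b$ and $\a'$ directly, after which the proof is purely bookkeeping with symmetric differences.
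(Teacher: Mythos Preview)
Your proof is correct and follows essentially the same approach as the paper: the paper also defines the index sets $\phi(j)=\{i:A_i\subset B_j\}$ (your $I_j$), sets $B'_j=\bigcup_{i\in\phi(j)}A'_i$, and deduces the metric bound from the inclusion $(C\cup D)\triangle(C'\cup D')\subset (C\triangle C')\cup(D\triangle D')$. Your write-up is simply a more explicit version of the same argument.
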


To see it we note that $\a \succ \b$ defines a function
$$\phi:\{1,\ldots,b\}\rightarrow 2^{\{1,\ldots,a\}}\setminus \emptyset$$
such that $A_{x}\subset B_{y}$ iff  $x\in \phi(y)$.
Let
$$\b'=\{B'_1,\ldots,B'_b\},\ B'_s=\bigcup_{t\in \phi(s)}A'_t .$$
Notice that if $d(\a,\a')<\ep$, then we also have $d(\b,\b')<\ep$, since it is easy to check that
$(A_1\cup B_1)\Delta (A_2\cup B_2)\subset A_1\Delta A_2\cup B_1\Delta B_2$.


\begin{prop}\label{increasing weak-nonminimal-dense minimal}
Let $(X,\X,\mu,T)$ be a non-periodic ergodic system. Then there exists an increasing sequence of finite
partitions $\{\gamma_n\}$ such that $\sigma(\gamma_n)\nearrow \X$ and for each $n\in \N$ the corresponding
symbolic representation $(X_{\gamma_n},\X_{\gamma_n},\rho_n,S)$ is a non-minimal topologically
weakly mixing system with a dense set of minimal points.
\end{prop}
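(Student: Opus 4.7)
The plan is to carry out the construction of Proposition \ref{weak-nonminimal-dense minimal} inductively, taking pains at each stage to preserve the refinement relation $\gamma_n\succ\gamma_{n-1}$ and to ensure $\sigma(\gamma_n)\nearrow\X$. The key idea is to realize each $\gamma_n$ as a common refinement of a pre-chosen partition $\hat{\gamma}_n$ and the previously-built $\gamma_{n-1}$, adorned with an auxiliary label that plays the role of the names $\xi_{n_m^i}$ of Proposition \ref{weak-nonminimal-dense minimal}.

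Fix an increasing sequence of finite partitions $\hat{\gamma}_1\prec\hat{\gamma}_2\prec\cdots$ with $\sigma(\hat{\gamma}_n)\nearrow\X$, and set $\gamma_0:=\{X\}$. For the inductive step, assume $\gamma_{n-1}$ has been built with the required properties and put $\eta_n:=\hat{\gamma}_n\vee\gamma_{n-1}$. Define $\gamma_n$ to take values in the product alphabet $\Sigma(\eta_n)\times\{0,1,\ldots,L\}$, with first coordinate equal to $\eta_n$ everywhere and second coordinate (the auxiliary label) equal to $0$ off a small-measure set of modification columns. Concretely, using Lemma \ref{infinite} I find a tower with infinitely many column-heights, refine it according to $\eta_n$ so each column has a well-defined $\eta_n$-name, and pick modification columns $\mathfrak{c}_{m,1}^{(n)},\ldots,\mathfrak{c}_{m,m+1}^{(n)}$ (one family per $m$) of tiny total measure and tall enough that, by ergodicity, their $\eta_n$-names contain every word in $\Sigma(\eta_n)^{2i-1}$ (respectively $\Sigma(\eta_n)^{2m}$, for $i=m+1$) as a substring with enough multiplicity. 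At each occurrence of an $\eta_n$-pattern $\bar a$ in the column's $\eta_n$-name I stamp a corresponding $\{0,1,\ldots,L\}$-pattern $\bar b$ on the auxiliary coordinate, ranging over all possible $\bar b$ as $\bar a$ recurs, so that the pair $(\bar a,\bar b)$ realizes every word of the required length in the product alphabet.

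By construction $\gamma_n\succ\eta_n\succ\gamma_{n-1}$ and $\gamma_n\succ\eta_n\succ\hat{\gamma}_n$, so $\sigma(\gamma_n)\supseteq\sigma(\hat{\gamma}_n)\nearrow\X$ and the refinement $\gamma_n\succ\gamma_{n-1}$ is automatic. The verification that $(X_{\gamma_n},S)$ is non-minimal, topologically weakly mixing, and has a dense set of minimal points then repeats the arguments in the proof of Proposition \ref{weak-nonminimal-dense minimal}, with the product-alphabet pattern on the modification columns playing the role of the $\xi_{n_m^i}$: its first-coordinate component is dictated by the column and its second-coordinate component is chosen freely, together realizing every required substring in $(\Sigma(\eta_n)\times\{0,\ldots,L\})^{2i-1}$ or $(\Sigma(\eta_n)\times\{0,\ldots,L\})^{2m}$.

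The main obstacle is the coupling of the two coordinates on the modification columns: for each $(2m)$-word in the product alphabet to appear with positive measure in $X_{\gamma_n}$, the column's $\eta_n$-name must contain every $\Sigma(\eta_n)^{2m}$-word with enough distinct occurrences that every choice of the $\{0,\ldots,L\}^{2m}$ auxiliary pattern can be stamped at a disjoint occurrence. Organizing this bookkeeping---while using disjoint families of modification columns for different $m$ (separating the weak-mixing and the density-of-minimal-points arguments, as the remark after the proof of Proposition \ref{weak-nonminimal-dense minimal} indicates)---is where the main technical work lies.
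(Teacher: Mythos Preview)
Your approach has a structural obstruction that cannot be repaired by bookkeeping. You define $\gamma_n$ so that its first coordinate is \emph{exactly} $\eta_n$ at every point. The one-block code ``forget the auxiliary label'' is then a continuous factor map $X_{\gamma_n}\to X_{\eta_n}$, and since topological weak mixing passes to factors, weak mixing of $(X_{\gamma_n},S)$ would force weak mixing of $(X_{\eta_n},S)$ and hence of $(X_{\hat\gamma_n},S)$. But $\{\hat\gamma_n\}$ was an \emph{arbitrary} generating sequence; for a generic partition $\hat\gamma_n$ the symbolic factor $X_{\hat\gamma_n}$ is not weakly mixing. Concretely, if $(X,T,\mu)$ is an irrational circle rotation and $\hat\gamma_1$ is a two-arc partition, then $X_{\hat\gamma_1}$ is a Sturmian shift, which is minimal and not weakly mixing; no choice of auxiliary label can make $X_{\gamma_1}$ weakly mixing.

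The place where your argument breaks is the sentence ``by ergodicity, their $\eta_n$-names contain every word in $\Sigma(\eta_n)^{2m}$ as a substring.'' After refining the tower by $\eta_n$, a column's $\eta_n$-name is a word that \emph{actually occurs} in the $\eta_n$-process; it can never contain an $\eta_n$-word of measure zero. If $\bar a_1,\bar a_2\in\Sigma(\eta_n)^m$ each have positive measure but the concatenation $\bar a_1\bar a_2$ has measure zero, then $(\bar a_1,0^m)$ and $(\bar a_2,0^m)$ are both nonempty cylinders in $X_{\gamma_n}$ (coming from the bulk where the auxiliary label is $0$), yet no point of $X_{\gamma_n}$ can pass from the first to the second in $m$ steps, since the first-coordinate factor forbids it. So the stamping scheme cannot realize all the required product-alphabet words.

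This is precisely why the paper does \emph{not} try to keep $\gamma_n\succ\gamma_{n-1}$ exactly at every finite stage. Instead it overwrites the partition on the modification columns (as in Proposition~\ref{weak-nonminimal-dense minimal}), which destroys the refinement, and then runs a triangular Cauchy scheme: one builds $\gamma_n^n$ close to $\beta_n$, pushes it down via Lemma~\ref{Fact} to coarser $\gamma_k^n$ for $k<n$, and takes the limit $\gamma_k=\lim_n\gamma_k^n$. The refinement $\gamma_k\prec\gamma_{k+1}$ and the generation $\sigma(\gamma_k)\nearrow\X$ are recovered only in the limit. The overwriting is what allows the symbolic factor to be weakly mixing even when the original $\beta_n$-factor is not.
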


\begin{proof}
The basic idea of the proof is the same as in the proof of of Proposition \ref{weak-nonminimal-dense minimal}.
Since we have to deal with countably many partitions, we need to do some small modifications with the proof.

\medskip

Let $\{\b_n\}$ be an increasing sequence of finite partitions such that $\sigma(\b_n)\nearrow \X$. First
we fix the same tower $\mathfrak{t}(C)$ as in the proof of Proposition \ref{weak-nonminimal-dense minimal}
and let $\{\ep_n\}$ be a sequence with $\sum_{n=1}^{\infty} \ep_n<\infty$.

For $\b_1$, we adjust $\a_1$ as in  Step 1 of the proof of Proposition \ref{weak-nonminimal-dense minimal}
to get a new partition $\gamma_1^1$. We replace $\b_2$ by $\b_2\bigvee \gamma_1^1$, and thus we have $\gamma_1^1\prec\b_2$.
Then continue our induction. To be precise, we rewrite the Step $m$.

\medskip

\textbf{Step $m'$:}
We replace $\b_m$ by $\b_m\bigvee \gamma_{m-1}^{m-1}$ (still denote it by $\b_m$), and thus we have
$\gamma_{m-1}^{m-1}\prec \b_m$. Let $\b_j=\{B^j_1, B^j_2, \ldots, B^j_{k_j}\}$ for $1\le j\le m$.
Since $\b_1\prec \ldots \prec \b_m$, we may assume that $B^j_i\subset B^{j-1}_i$, for each $2\le j\le m$, $1\leq i\leq k_{j-1}$.

Recall that the word $\w_m$ depends on the cardinality of the partition in the proof of Proposition
\ref{weak-nonminimal-dense minimal}. Unlike the situation there, 
now the cardinalities of partitions are increasing. Let $\w_m=\w_m(k)$ as in the proof of
Proposition \ref{weak-nonminimal-dense minimal}, and denote $\w_{m,j}=\w_m(k_j)$ for $1\le j\le m$.
That is, $\w_{m,j}$ is a word which contains all the $m$-name in $\{1,2,\ldots,k_j\}^m$.

For each $m\in \N$, choose columns $\mathfrak{c}_{n_m^1},\ldots, \mathfrak{c}_{n_m^{m(m+1)}}$
such that $$\mu(\bigcup_{i=1}^{m(m+1)}|\mathfrak{c}_{n_m^i}|)< \ep_m .$$
The columns should be disjoint from the columns in Step $k$, $k<m$. We assume that for each
$1\leq j\leq m$, $h_{n_m^{j(m+1)}}>\ldots >h_{n_m^{(j-1)(m+1)+1}}>2m^2k_j^{2m}$. For each
$1\leq j\leq m$, $1\leq i\leq m$, $s=(m+1)(j-1)+i$, let

\begin{equation*}
\begin{split}
   \xi_{n_m^{s}} & =\underbrace{\w_{2i-1,j}\w_{2i-1,j}\ldots\w_{2i-1,j} }_{m-j\
\text{times}}1^{h_{n_m^s}-(m-j)(2i-1)k_j^{2i-1}}\\ & =(\w_{2i-1,j})^{m-j}1^{h_{n_m^s}-(m-j)(2i-1)k_j^{2i-1}}
\in \{1,\ldots,k_j\}^{h_{n_m^{s}}}.
\end{split}
\end{equation*}
And let
$$\xi_{n_m^{j(m+1)}}=\w_{2m,j}1^{h_{n_m^{j(m+1)}}-2mk_j^{2m}}\in \{1,\ldots,k_j\}^{h_{n_m^{j(m+1)}}}.$$
Now for $1\leq i \leq m(m+1)$, copy the name $\xi_{n_m^i}$ to the column $\mathfrak{c}_{n_m^i}$ and we
obtain a new partition $\gamma_m^m$ with $d(\b_m,\gamma_m^m)<\ep_m$.

Inductively, we could construct a sequence of partitions $\{\gamma_n^n\}_n$ with the property
that $d(\b_m,\gamma_m^m)<\ep_m$ for each $m\in\N$. Now we need to build the
 required partition $\{\gamma_n\}$ from $\{\gamma_n^n\}_n$. First we construct
 partitions $\{\gamma_k^n\}_{n\in \N, 1\le k\le n}$ via $\{\gamma_n^n\}_n$. Then $\gamma_k=\lim_n \gamma_k^n$ is what we are looking for.

$$
\begin{array}{cccc}
    \gamma_1^1 &  &  &  \\
    \gamma_1^2 &  \gamma_2^2 &  &  \\
    \gamma_1^3 & \gamma_2^3 & \gamma_3^3 & \\
    \ldots & \ldots & \ldots & \ldots \\
    \downarrow & \downarrow &\downarrow &\downarrow\\
    \gamma_1 & \gamma_2 & \gamma_3 & \ldots
  \end{array}
$$

\medskip

Applying Lemma \ref{Fact} to $\beta_2,\gamma_2^2,$ and $\gamma_1^1$ we obtain $\gamma_1^2$. Similarly, applying Lemma \ref{Fact}
to $\beta_3,\gamma_3^3,$ and $\gamma_2^2$ we obtain $\gamma_2^3$, and we get $\gamma_1^3$ by applying Lemma \ref{Fact} to
$\beta_3, \gamma_3^3$ and $\gamma_1^2$. Inductively, we construct $\gamma_k^n$ for $k<n$ by applying Lemma \ref{Fact} and $\beta_n\prec \gamma_k^{n-1}$, for $k<n$.
Since $\b_1\prec \b_2\prec \ldots \prec \b_n$, we have
$\gamma_1^n\prec \gamma_2^n\prec\ldots \prec \gamma_n^n$ accordingly.

Since $d(\gamma_n^n,\b_n)<\ep_n$ and $\b_n\succ \gamma_{n-1}^{n-1}$, we know
that for each $1\le k\le n-1$, we have $d(\gamma_k^n,\gamma_k^{n-1})<\ep_n$.
That means for each $k$, $\{\gamma_k^n\}_{n\ge k}$ is a Cauchy sequence. So there is a partition $\gamma_k$ such that $\gamma_k^n\rightarrow \gamma_k$, as $n\rightarrow\infty$. Let $X_k^n$ denote the corresponding symbolic system of $\gamma_k^n$. The array shows the induction.

\[\begin{array}{cccc}
    X_1^1 &  &  &  \\
    X_1^2 &  X_2^2 &  &  \\
    X_1^3 & X_2^3 & X_3^3 & \\
    ... & ... & ... & ...
  \end{array}
\]

\medskip

By the construction, for each $k$, the sequence $\{\gamma_k^n\}_{n\ge k}$ has the same
property as $\{\a_n\}$ in Proposition \ref{weak-nonminimal-dense minimal}. According
to the proof of Proposition \ref{weak-nonminimal-dense minimal}, the corresponding
symbolic system $X_k=X_{\gamma_k}$ of $\gamma_k$ is  non-minimal topologically weakly
mixing with a dense set of minimal points.

Since for each $n\in \N$, $\gamma_1^n\prec \gamma_2^n\prec\ldots \prec \gamma_n^n$, we conclude that
$\{\gamma_k\}_k$ is increasing. As $\sigma(\b_k)\nearrow \mathcal{X}$, and
$d(\gamma_k,\b_k)<\sum_{s= k}^\infty \ep_s$, we deduce $\sigma(\gamma_k)\nearrow \mathcal{X}$ too.
\end{proof}

\medskip

Now Theorem \ref{main-result}(1) follows from Proposition \ref{increasing weak-nonminimal-dense minimal} and the following lemma.

\begin{lem}\label{inverse limit}
Let $(X,T)$ be the inverse limit of $\{(X_n,T_n)\}_n$, where each $(X_n,T_n)$ is a non-minimal
topologically weakly mixing system with a dense set of minimal points.
Then $(X,T)$ is also a non-minimal topologically weakly mixing system with a dense set of minimal points.
\end{lem}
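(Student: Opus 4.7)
The proof decomposes into three independent verifications, all resting on the canonical projection $p_n\colon X\to X_n$, which is continuous, intertwines $T$ with $T_n$, and is surjective because $X$ is the inverse limit of compact metric spaces with surjective bonding maps; consequently $(X_n,T_n)$ is a topological factor of $(X,T)$ for every $n$. Non-minimality is then immediate, because a factor of a minimal system is minimal and $(X_1,T_1)$ is non-minimal by hypothesis.

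For weak mixing I verify that $(X\times X,T\times T)$ is transitive. Any nonempty open subset of $X\times X$ contains a rectangle of the form $p_n^{-1}(A)\times p_n^{-1}(B)$ with $A,B$ nonempty open in $X_n$; given two such rectangles, I may enlarge $n$ and pass through the surjective bonding maps to assume a common index $n$. Weak mixing of $(X_n,T_n)$ then furnishes some $k\ge 1$ with $A_1\cap T_n^{-k}A_2\neq\varnothing$ and $B_1\cap T_n^{-k}B_2\neq\varnothing$; taking preimages under $p_n\times p_n$ and invoking surjectivity yields the required nonempty intersection in $X\times X$.

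For density of minimal points, let $U\subset X$ be nonempty open. Reducing to basic opens, I may assume $U=p_n^{-1}(V)$ for some $n$ and nonempty open $V\subset X_n$. Since minimal points are dense in $X_n$, choose a minimal point $y\in V$ and set $M_y=\overline{\mathcal{O}(y,T_n)}$, a minimal subset of $X_n$. Then $p_n^{-1}(M_y)$ is closed and $T$-invariant in $X$, so by Zorn's lemma it contains a minimal subset $M\subset X$. As $p_n(M)$ is a nonempty closed $T_n$-invariant subset of the minimal system $M_y$, we have $p_n(M)=M_y$; in particular some $x\in M$ satisfies $p_n(x)=y\in V$, producing a minimal point $x\in U$ of $(X,T)$. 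The only moderately delicate step is this last lifting, but it is the standard argument that surjective factor maps pull minimal sets down to minimal sets, so I anticipate no serious obstacle beyond bookkeeping for the inverse system.
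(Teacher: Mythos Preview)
Your proof is correct and follows essentially the same approach as the paper's: non-minimality via the fact that factors of minimal systems are minimal, weak mixing by pulling back basic open rectangles through the projections, and density of minimal points by lifting a minimal set from $X_n$ to a minimal subset of its preimage in $X$. The paper's argument is terser (it dispatches weak mixing with ``the similar argument can be applied'' and states the minimal-set lifting without invoking Zorn explicitly), but the content is the same.
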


\begin{proof}
By the definition of the inverse limit, it is easy to see that $(X,T)$ is not minimal as the factor of a minimal system
is minimal.

To show the density of minimal points in $X$ assume $U$
is a nonempty open set. Let $\pi_n:X\lra X_n$ be the projection. Then by the topology of $X$, there are
$n\in \N$ and an open non-empty set $U_n\subset X_n$ such that $\pi_n^{-1}U_n\subset U$. Let $x_n\in U_n$ be a minimal point
and $A$ be its orbit closure. Then there is a minimal set $B$ of $X$ such that $\pi_n(B)=A$. This implies that
there is a minimal point $x$ of $X$ such that $\pi_n(x)=x_n$ which implies that $x\in \pi_n^{-1}(x_n)\subset \pi_n^{-1}(U_n)\subset U$, and
hence the set of minimal point of $X$ is dense. The similar argument can be applied to show that
$(X,T)$ is weakly mixing. The proof of is completed.
\end{proof}

\section{Proof of Theorem \ref{main-result}-(2)}

In this section, we will prove Theorem \ref{main-result}-(2). First we will construct a model which is weakly mixing with a full support but its set of minimal points is not dense. Since in this case the closure of the set of minimal points has measure zero, we collapse it to a point and get the system required.

\medskip

First we need the following lemma (see \cite{Glasner,GW06,Weiss00} for a proof).

\begin{lem}\label{KR tower}
Let X be a non-periodic ergodic system.
For any positive integers $N_1,N_2$ with \((N_1,N_2)=1\), there exists a set $C$ of finite height such that the K-R tower \(\textbf{t}(C)\) satisfies range \(r_C\subset\{N_1,N_2\}\).
\end{lem}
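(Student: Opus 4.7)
The plan is to combine Rokhlin's lemma with the Frobenius (``coin'') theorem: since $(N_1,N_2)=1$, every integer $n\ge L:=(N_1-1)(N_2-1)$ admits a representation $n=aN_1+bN_2$ with $a,b\in\Z_{\ge 0}$, and one can choose $(a(n),b(n))$ measurably in $n$ (for instance, by taking $b$ to be the unique element of $\{0,1,\ldots,N_1-1\}$ with $bN_2\equiv n\pmod{N_1}$ and then $a=(n-bN_2)/N_1$). This reduces the problem to cutting orbit segments into blocks of length $N_1$ and $N_2$.

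First, applying Rokhlin's lemma with height $L$, I would obtain a measurable set $B_0\in\B$ with $B_0,TB_0,\ldots,T^{L-1}B_0$ pairwise disjoint. Disjointness forces $r_{B_0}(x)\ge L$ for every $x\in B_0$, and Poincar\'e recurrence gives $r_{B_0}(x)<\infty$ a.e., so a.e.\ $x\in B_0$ admits a measurable representation $r_{B_0}(x)=a(x)N_1+b(x)N_2$. I would then partition each column of the K--R tower $\mathfrak{t}(B_0)$ (i.e., each orbit segment $x,Tx,\ldots,T^{r_{B_0}(x)-1}x$) into $a(x)$ consecutive blocks of length $N_1$ followed by $b(x)$ consecutive blocks of length $N_2$, and let $C$ be the union over $x\in B_0$ of the resulting block-start points. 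Measurability of $C$ follows from that of $r_{B_0}$ and $(a(x),b(x))$, and since $\mathfrak{t}(B_0)$ tiles $X$ mod null, so does $\mathfrak{t}(C)$.

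Finally, I would check the return-time condition. Any $y\in C$ has the form $y=T^j x$ for some $x\in B_0$ and some block-start position $j$ in $x$'s column. If $y$ is not the start of the final block, the next element of $C$ in the orbit is the start of the following block, at distance $N_1$ or $N_2$; if $y$ starts the final block, the next element of $C$ is $T^{r_{B_0}(x)}x\in B_0\subset C$, and the distance from $y$ is exactly the length of the final block, again $N_1$ or $N_2$. Thus $\text{range}(r_C)\subset\{N_1,N_2\}$ and in particular $r_C$ is bounded, so $C$ has finite height. The only delicate point is the measurable selection of $(a(x),b(x))$, which the explicit formula above handles; the ``wrap-around'' at the top of each K--R column back into $B_0$ is absorbed by design into the length of the final block.
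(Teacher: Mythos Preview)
The paper does not give its own proof of this lemma, citing instead \cite{Glasner,GW06,Weiss00}. Your argument is correct and is precisely the standard proof found in those references: apply Rokhlin's lemma to force all return times to $B_0$ above the Frobenius bound $L=(N_1-1)(N_2-1)$, then use the coin theorem to cut each column of the Kakutani tower $\mathfrak{t}(B_0)$ into $N_1$- and $N_2$-blocks whose starting levels form the new base $C$.
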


To show Theorem \ref{main-result}-(2) we start with the following proposition and then follow the
standard procedure to finish the proof. To control the thickly syndetic sets, the construction here
is more involved than that in Proposition \ref{weak-nonminimal-dense minimal}.

\begin{prop}\label{weak-nonminimal-dense nominimal}
Let $(X,\X,\mu,T)$ be a non-periodic ergodic system and $\hat{\a}$ a finite partition of $X$.
Then for each $\ep>0$, there is a partition $\a$ such that the corresponding symbolic representation
$(X_\a,\X_\a,\rho,S)$ is a weakly mixing system whose set of minimal points is not dense, and
$$d(\a,\hat{\a})<\ep.$$
\end{prop}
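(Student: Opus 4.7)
The plan is to mimic the construction from Proposition \ref{weak-nonminimal-dense minimal}, retaining the painted $\omega_{2m}$-patterns that yield weak mixing but surrounding them by very long runs of the default symbol $1$, so that syndetic recurrence to any non-trivial cylinder is destroyed. By Lemma \ref{infinite}, fix a tower $\mathfrak{t}(C)$ with columns $\mathfrak{c}_k$ of strictly increasing heights $h_1<h_2<\cdots$. Set $\alpha_0=\hat\alpha$ and, at stage $m$, modify $\alpha_{m-1}$ only on an unused column $\mathfrak{c}_{n_m}$ satisfying $\mu(|\mathfrak{c}_{n_m}|)<\epsilon/2^m$ and $h_{n_m}$ much larger than $|\omega_{2m}|+L_m$, where $\{L_m\}$ is a rapidly growing auxiliary sequence.

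On $\mathfrak{c}_{n_m}$ I would copy the name
\[
\xi_m \;=\; 1^{L_m}\,\omega_{2m}\,1^{h_{n_m}-L_m-|\omega_{2m}|},
\]
and keep the default symbol $1$ on the remaining columns, producing $\alpha_m$ with $d(\alpha_{m-1},\alpha_m)<\epsilon/2^m$; the limit $\alpha$ satisfies $d(\hat\alpha,\alpha)<\epsilon$. The weak-mixing verification is the one from Proposition \ref{weak-nonminimal-dense minimal}: for length-$m$ cylinders $E_1,F_1,E_2,F_2$ with names $e_1,f_1,e_2,f_2$, one locates positions $s,t$ with $e_1e_2=u_s$ and $f_1f_2=u_t$ inside $\omega_{2m}$, and then
\[
T^{L_m+2m(s-1)}C_{n_m}\times T^{L_m+2m(t-1)}C_{n_m}\ \subset\ (E_1\times F_1)\cap (T\times T)^{-m}(E_2\times F_2),
\]
a set of positive $\mu\times\mu$-measure, so $(X_\alpha\times X_\alpha,S\times S)$ is transitive.

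To prove that the minimal points are not dense, the strategy is to show that every point of $X_\alpha$ is either the fixed sequence $1^\infty$ or contains arbitrarily long runs of $1$'s. This should follow from the structure of $\xi_m$: the only mechanism that introduces non-trivial symbols in the symbolic representation $\phi_\alpha(x)$ is a visit of $x$ to one of the stage-$m$ painted columns, and each such visit is both preceded and followed by a run of $1$'s of length at least $L_m$. Consequently, a transitive point $y\in X_\alpha$ has $y\neq 1^\infty$, and for any neighborhood $U=[y[-r,r]]$ with $y[-r,r]\neq 1^{2r+1}$ the complement $\Z_+\setminus N(y,U)$ will be thickly syndetic; by the characterization of $M$-systems recalled before Proposition \ref{weak-nonminimal-dense minimal}, the system $(X_\alpha,S)$ is then not an $M$-system, which is to say that the set of minimal points is not dense.

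The hardest step will be choosing $\{L_m\}$ and $\{n_m\}$ so that this argument is not disturbed by later stages or by taking limits of shifts. One must check in particular that a limiting point $y=\lim_j \sigma^{k_j}\phi_\alpha(x_j)$ cannot exhibit non-trivial blocks packed so tightly that their separation by $1$-runs breaks down; this is the meaning of the paper's warning that ``the construction here is more involved than that in Proposition \ref{weak-nonminimal-dense minimal}'', and this is also where Lemma \ref{KR extra prime tower} can be invoked if replacement towers with controlled arithmetic are needed to keep the inductive modifications legal.
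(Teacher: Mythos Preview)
Your weak-mixing argument is fine, but the ``minimal points not dense'' part has a genuine gap. The sentence ``the only mechanism that introduces non-trivial symbols in $\phi_\alpha(x)$ is a visit of $x$ to one of the stage-$m$ painted columns'' is false. You started from $\alpha_0=\hat\alpha$ and only altered it on the small set $E=\bigcup_m|\mathfrak{c}_{n_m}|$, so on the complement of $E$ (which has measure $>1-\epsilon$) the partition $\alpha$ \emph{equals} $\hat\alpha$, and the $\alpha$-name of a generic point there is its $\hat\alpha$-name, which is typically full of non-trivial symbols. The phrase ``keep the default symbol $1$ on the remaining columns'' cannot mean you overwrite those columns with $1$'s, since that would make $d(\alpha,\hat\alpha)$ of order $1-\mu(\hat A_1)$, not $<\epsilon$.

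Concretely: a $\mu$-generic $x$ has arbitrarily long orbit segments lying entirely outside $E$ (since $\mu(E)<\epsilon$). Along those segments the $\alpha$-name coincides with the $\hat\alpha$-name, so every word occurring in $X_{\hat\alpha}$ also occurs in $X_\alpha$, and limits of such segments produce points of $X_\alpha$ whose finite blocks are all $\hat\alpha$-blocks. For a generic $\hat\alpha$ (e.g.\ one whose process is Bernoulli) these limit points include minimal points carrying every symbol, so no non-trivial cylinder can be free of minimal points. Your concluding paragraph acknowledges a difficulty with limits, but the obstacle is not a delicate choice of $L_m$ or an arithmetic issue curable by Lemma~\ref{KR extra prime tower}; it is that the unpainted bulk of the space still carries $\hat\alpha$.

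The paper's mechanism is essentially orthogonal to yours: rather than trying to manufacture long $1$-runs on small columns, it works with nested K-R towers $\mathfrak t(C_n^{(n)})$ covering the \emph{whole} space (built via Lemma~\ref{KR extra prime tower}), and at stage $n$ paints the symbol $2$ on a sparse but syndetic set of levels $\{s\,l_n+t:0\le t\le n\}$ in \emph{every} column. This changes only a set of measure $\approx n/l_n$, keeping $d(\alpha_n,\alpha_{n+1})$ small, yet forces $N(w,[2]_0)$ to be thickly syndetic for $\rho$-a.e.\ $w$; hence $N(w,[1]_0)$ is not piecewise syndetic, and $[1]_0$ is a cylinder with no minimal points. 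The relatively prime heights from Lemma~\ref{KR extra prime tower} are used here not to ``keep modifications legal'' but to fill gaps between the inserted $\omega_n$-blocks with legitimate $\alpha_{n-1}$-column names, so that the weak-mixing word $\omega_n$ can be embedded without destroying the earlier syndetic pattern of $2$'s.
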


\begin{proof}
The proof will be conducted by an inductive procedure. We first choose a sequence of positive real numbers
$\{\ep_n\}_{n=0}^\infty$ with $\sum_{n=0}^\infty\ep_n<\ep$. Then we  start from $\a_{-1}=\hat{\a}$
and construct $\{\a_n\}$ so that $d(\a_n,\a_{n+1})< \ep_{n+1}$ for $n\ge -1$. It is easy to see that the limiting
partition $\a$ satisfies $d(\hat{\a},\a)<\ep$.
To do so let $\a=\{A_1,A_2,\ldots,A_k\}$. On one hand, $\a$ is
constructed so that $(X_\a,T)$ is topologically weakly mixing. On the other hand, almost every point
will enter $A_2$ thickly syndetically so that the set of minimal points is not dense.
Now we begin our construction.
\medskip

\noindent {\bf Step $0$:}
Let $\hat{\a}=\{\hat{A}_1,\ldots,\hat{A}_k\}$. Let $\w_0$ be the name containing all pairs of names
of non-trivial elements in $\bigvee_{i=0}^1T^{-i}\hat{\a}$, where ``non-trivial elements'' in this
proof means the elements with positive measures.

Let $M_0=\min \{\mu(B): B\in \bigvee_{i=0}^1T^{-i}\hat{\a}\}$ and $0<\ep_0<\min\{\frac{\ep}{3},\frac{M_0}{3}\}$.
Choose $l_0\in \N$ such that $l_0>\max\{\frac{6}{\ep_0},2k^2\}$. Now for a fixed $N_0>\max\{\frac{6l_0}{\ep_0},\frac{6}{M_0}\}$,
by Lemma \ref{KR tower} there is a tower $\mathfrak{t}(C_0)=\{\mathfrak{c}_0^1,\mathfrak{c}_0^2\}$
such that heights of columns $\mathfrak{c}_0^1,\mathfrak{c}_0^2$ are $N_0,N_0+1$ respectively and the
corresponding bases are $C_0^1,C_0^2$. It is clear $C_0=C_0^1\cup C_0^2$. Put $e_0=\mu(C_0)$.
Copy the name $\w_0$ on the partial column $\{T^iC_0\}_{0\leq i\leq |\w_0|-1}$.
Then in column $\mathfrak{c}_0^1$, copy $2$ to the position $il_0$ for all $0\le i\le \frac{N_0-1}{l_0}$,
and in column $\mathfrak{c}_0^2$, copy $2$ to the position $il_0$ for all $0\le i\le \frac{N_0}{l_0}$.

In such a way we have constructed a new partition $\a_0 = \{A_1^0, A_2^0, \ldots, A_k^0 \}$. Note that
$d(\a_0,\hat{\a})<\frac{\ep_0}{3}$, since the measure changed is less than
$$(2k^2+\frac{N_0}{l_0})\mu(C_0) < (2k^2+\frac{N_0}{l_0}) \frac{1}{N_0} < \frac{\ep_0}{6} + \frac{\ep_0}{6} = \frac{\ep_0}{3}.$$

Let $A_{i_1}^0, A_{i_2}^0, A_{i_3}^0, A_{i_4}^0 \in \a_0$. Assume that positions of 2-name $(i_1,i_3),(i_2,i_4)$
appearing in $\w_0$ are $s$ and $t$. Then $T^sC_0\subseteq A^1_{i_1}$, $T^{s+1}C_0\subseteq A^1_{i_3}$, $T^r C_0
\subseteq A^0_{i_2}$ and $T^{r+1}C_0\subseteq A^0_{i_4}$. Hence
$$T^sC_0\times T^{r}C_0\subset (A_{i_1}^0\cap T^{-1}A_{i_3}^0)\times (A_{i_2}^0\cap T^{-1}A_{i_4}^0)
=(A_{i_1}^0\times A_{i_2}^0)\cap (T\times T)^{-1}(A_{i_3}^0\times A_{i_4}^0).$$
In particular, $$\mu\times \mu\big((A_{i_1}^0\times A_{i_2}^0)\cap (T\times T)^{-1}(A_{i_3}^0
\times A_{i_4}^0)\big)\ge \mu(C_0)^2=e_0^2>0.$$

\medskip

Now assume that inductively we have constructed partitions $\{\a_i\}_{i=0}^n$, two sequences of positive
integers $\{l_i\}_{0\leq i\leq n}$,$\{s_i\}_{0\leq i\leq n}$, two sequences of positive
numbers $\{\ep_i\}_{0\leq i\leq n},\{e_i\}_{0\leq i\leq n}$, with $\ep_{i+1}<\min\{\frac{\ep_i}{3},\frac{e_i^2}{3}\}$.
Also assume that we have obtained a sequence of K-R towers with relatively prime heights
$\{\mathfrak{t}(C_i^{(j)})\}_{1\leq i\le j\leq n}$ such that $C_0\supset C_1^{(1)}\supset
\ldots\supset C_n^{(n)}$, and the height of $\mathfrak{t}(C_i^{(n)})$ ranges in $[N_i,N_i+6N_{i-1}]$
with some positive integers $\{N_j\}_{1\leq j\leq n}$.

Let $\a_i=\{A_1^i,A_2^i,\ldots,A_k^i\}$ for $1\le \a_i \le n $.
The sequence $\{\a_i\}_{i=1}^n$ satisfies the following properties: for each $i\le n$

\begin{description}

  \item[$(1)_i$] We have $d(\a_{i-1},\a_{i})<\ep_{i}$. Let $\bigvee_{j=0}^{i}T^{-j}
  \a_{i-1}=\{U_1,\ldots,U_{\eta}\}$ with $U_j$ being nontrivial. Then there is a subset $\{B_1,\ldots,B_{\eta}\}
  \subset\bigvee_{j=0}^{i}T^{-j} \a_i$ such that the $\a_{i-1}$-name of $U_h$ and the $\a_i$-name of $B_h$ are
  the same for each $1\le h\le \eta$. Moreover, for all $E_1,F_1,E_2,F_2 \in \{U_1,\ldots, U_{\eta}\}$,
  one has that $$\mu\times\mu((T\times T)^{s_i}(E_1\times F_1)\cap(E_2\times F_2))>e_i^2>0.$$
      In particular, for all $E_1,F_1,E_2,F_2 \in \bigvee_{j=0}^{i-1}T^{-j} \a_{i-1}$, one has
      that $$\mu\times\mu((T\times T)^{s_i}(E_1\times F_1)\cap(E_2\times F_2))>e_i^2>0.$$

  \item[$(2)_i$] $C_i^{(i)}\subset C_{i-1}^{(i)}\subset \ldots \subset C_1^{(i)}$ and for $j\le i-1$,
  $\mu(|\mathfrak{t}(C_{j}^{(i-1)})|\Delta |\mathfrak{t}(C_j^{(i)})|)<\ep$.
  Refine the towers $\mathfrak{t}(C_j^{(i)})$ according to $\a_i$ for each $1\le j\le i$.
  For each $1\le j \le i$, if a column $\mathfrak{c}$ with base $C$ in the resulting tower
  $\mathfrak{t}(C_j^{(i)})$ has the $\a_i$-name $(a_1,a_2,\ldots,a_h)\in \Sigma^h$, then the name satisfies
      \begin{equation}\label{2i}
        a_{sl_j+t}=2 \ \text{for each}\ 0\leq t\leq j, 1\leq s\leq \tfrac{h-j-2}{l_j},
\end{equation}
 i.e. $T^{sl_j+t}C\subset A_2^i$.

\medskip

\[\begin{array}{cccccc}
   \mathfrak{t}(C_1^{(1)}) & \mathfrak{t}(C_1^{(2)}) & \mathfrak{t}(C_1^{(3)}) & \cdots & \rightarrow & \mathfrak{t}(C_1^{*}) \\
    & \mathfrak{t}(C_2^{(2)}) & \mathfrak{t}(C_2^{(3)})  & \cdots &\rightarrow &  \mathfrak{t}(C_2^{*})  \\
    &  & \mathfrak{t}(C_3^{(3)}) & \cdots &\rightarrow  &\mathfrak{t}(C_3^{*}) \\
    &  &  &  \cdots & \cdots &\cdots
  \end{array}
\]

\medskip
\end{description}

\medskip

Note that $(1)_{i}$ will be used to show that $X_\a$ is weakly mixing, and $(2)_{i}$ will be used to show that the minimal points are not dense in $X_\a$.

\medskip

\noindent {\bf Step $n+1$:} Now we make the induction for the $n+1$ case. First we need to define a word $\w_{n+1}$
which contains all pairs of names of non-trivial elements in $\bigvee_{i=0}^{n+1}T^{-i}\a_n$. We do it as follows.

Refine the tower $\mathfrak{t}(C_n^{(n)})$ according to $\a_n$, and let the resulting tower be
$\mathfrak{t}(C_n^{(n)})=\{\mathfrak{c}_n^j\}_j$. Note that the height of each column is in
$[N_n,N_n+6N_{n-1}]$. Let $W_{n+2}=\{B_1,B_2,\ldots,B_t\}\subset \{1,\ldots,k\}^{n+2}$ be the
set of all names of nontrivial elements of $\bigvee_{i=0}^{n+1}T^{-i}\a_n$.
Each $(n+2)$-word $B_j$ ($j\in \{1,\ldots,t\}$) in $W_{n+2}$ either appears in some
column $\mathfrak{c}_n^{i_j}$ of $\mathfrak{t}(C_n^{(n)})$, or appears in the concatenation
of two columns of $\mathfrak{t}(C_n^{(n)})$ (i.e. there are $\mathfrak{c}_a,\mathfrak{c}_b$
in $\mathfrak{t}(C_n^{(n)})$ such that the name appears in $\mathfrak{c}_a\mathfrak{c}_b$).
In the second case we also use $\mathfrak{c}_n^{i_j}$ to denote the  concatenation of two
columns. Let $\tilde{B}_j$ be the name of $\mathfrak{c}_n^{i_j}$.

Now fix a large number $s_{n+1}>10 N_n^2$, and construct the word $\w_{n+1}$ as follows:
For each pair $(j_1,j_2)\in \{1,\ldots,t\}^2$, make sure that words $\mathfrak{c}_n^{i_{j_1}}$
and $\mathfrak{c}_n^{i_{j_2}}$ appear in $\w_{n+1}$, and the distance from the word $B_{j_1}$ to the
word $B_{j_2}$ is $s_{n+1}$. Since the column heights of $\mathfrak{t}(C_n^{(n)})$ are relatively
prime and $s_{n+1}$ is large enough, one can use $\a_n$-names of columns $\{\mathfrak{c}_n^i\}_i$
to fill gaps between each pair $\tilde{B}_i,\tilde{B}_j$.

\medskip

Let $M_{n+1}<\frac{1}{2}\min_{B\in \bigvee_{i=0}^{n+2}T^{-i}{\a_n}}\{\mu(B)\}$, $\ep_{n+1}
<\min\{\frac{\ep_n}{3},\frac{e_n^2}{3}\}$. Then let $l_{n+1}>\max\{|\w_{n+1}|+10N_n^2+3N_n,
\frac{6n}{\ep_{n+1}}\}$ and $N_{n+1}>\max\{\frac{6(n+1)l_{n+1}}{\ep_{n+1}},\frac{n+3}{M_{n+1}}\}$.
By Proposition \ref{KR extra prime tower}, we have a new K-R tower $\mathfrak{t}(C_{n+1}^{(n+1)})$
with relatively prime column heights and $C_{n+1}^{(n+1)}\subset C_n^{(n)}$, and its height ranges
in $[N_{n+1},N_{n+1}+6N_n]$. Refine $\mathfrak{t}(C_{n+1}^{(n+1)})$ according to $\a_n$, and let
the resulting tower be $\{\mathfrak{c}_{n+1}^j\}_j$. Let the base of $\mathfrak{c}_{n+1}^j$ be $C_{n+1}^j$,
and let its height be $H_j$. Let $e_{n+1}=\min_i\{\mu(C_{n+1}^i)\}$. Now we do the following adjustment
for each column $\mathfrak{c}_{n+1}^j$.

Denote the name $\mathfrak{c}_{n+1}^j$ by $(c_1,c_2,\ldots,c_{H_j})\in \Sigma^{H_j}$.
First, copy the name $\w_{n+1}$ to $(c_h, \ldots, c_{|\w_{n+1}|+h-1})$, where $h>n+3$ is the
first number such that $T^{h-1}C_{n+1}^j\subset C_n^{(n)}$.
Secondly, we choose a $R\in \N$ such that
$$l_{n+1}-2N_n\leq R<l_{n+1}, R-(|\w_{n+1}|+h-1)>10N_n^2,\ \text{and}\ T^{R-1}C_{n+1}^j\subset C_n^{(n)}.$$
Since the column heights of the tower $\mathfrak{t}(C_n^{(n)})$
are relatively prime, we can replace $(c_{|\w_{n+1}|+1},\ldots,c_{R-1})$ by the names encountered
in the tower $\mathfrak{t}(C_n^{(n)})$. 

Finally, copy $2$ to $c_{sl_{n+1}+r}$ for each $0\leq r\leq n+1,
1\leq s\leq \frac{H-n-2}{l_{n+1}}$. Then according to the new name we have a new partition $\a_{n+1}$.

\medskip
\noindent{\bf Properties of $\alpha_{n+1}$}:
Note that by the construction of $\a_{n+1}$ if we refine the tower $\mathfrak{t}(C_{n+1}^{(n+1)})$
according to $\a_{n+1}$, then the resulting tower will still be $\{\mathfrak{c}_{n+1}^j\}_j$. Since
column heights of the tower $\mathfrak{t}(C_n^{(n)})$ are relatively prime, we have made sure that
the first $l_{n+1}$ length part of the name along the column in $\{\mathfrak{c}_{n+1}^j\}_j$ consists
only of the name encountered in the tower $\mathfrak{t}(C_n^{(n)})$. These change the levels where the
bases of the $\mathfrak{t}(C_n^{(n)})$ name blocks occur. Thus it defines a new base which we called
$C_n^{(n+1)}$, and therefore a new K-R tower $\mathfrak{t}(C_n^{(n+1)})$. Since $d(\a_n,\a_{n+1})<\ep_{n+1}$,
changes from the tower $\mathfrak{t}(C_n^{(n)})$ to $\mathfrak{t}(C_n^{(n+1)})$ are very small (less than $\ep_{n+1}$).
Since we copy $2$ to $c_{sl_{n+1}+r}$ for each $0\leq r\leq n+1,1\leq s\leq \frac{H-n-2}{l_{n+1}}$,
each $\a_{n+1}$-name of $\mathfrak{t}(C_n^{(n+1)})$ either has the same name with some column
in $\mathfrak{t}(C_n^{(n)})$, or has more $2$ appeared than  some column name in $\mathfrak{t}(C_n^{(n)})$.
Anyway, for each name with the length $h$ in $\mathfrak{t}(C_n^{(n+1)})$, in the positions
$sl_n+t, \forall 0\le t\le j, 1\le s\le \frac{h-n-2}{l_n}$ the names are $2$.

By $(2)_n$, $C_n^{(n)}\subset C_{n-1}^{(n)}\subset \ldots \subset C_1^{(n)}$, above changes from
the tower $\mathfrak{t}(C_n^{(n)})$ to the tower $\mathfrak{t}(C_n^{(n+1)})$ will induce corresponding
changes such that the tower $\mathfrak{t}(C_j^{(n)})$ will become some new tower $\mathfrak{t}(C_j^{(n+1)})$
for each $1\le j\le n-1$, where $C_{n+1}^{(n+1)}\subset C_{n}^{(n+1)}\subset \ldots \subset C_1^{(n+1)}$.
By the same reason as showed for $\mathfrak{t}(C_n^{(n+1)})$, equality (\ref{2i}) holds for each $j\le n+1$.
Thus we have $(2)_{n+1}$.

\medskip

Now we verify that $\a_{n+1}$ satisfies $(1)_{n+1}$.

\medskip

By the construction, the measure changed from $\a_n$ to $\a_{n+1}$ is less than
\begin{equation*}
\begin{split}
& \quad \ \mu(C_{n+1})(|\w_{n+1}|+(n+2)\tfrac{N_{n+1}+6N_n}{l_{n+1}})\\ & < \frac{1}{N_{n+1}}(l_{n+1}+(n+2)\frac{2N_{n+1}}{l_{n+1}})\\ & < \frac{1}{6n+1}\ep_{n+1}+\frac{2(n+1)}{l_{n+1}}<\frac{\ep_{n+1}}{2}+\frac{\ep_{n+1}}{2}=\ep_{n+1}.
\end{split}
\end{equation*}
Thus we conclude that $d(\a_n,\a_{n+1})<\ep_{n+1}$. And the second part of $(1)_{n+1}$ is guaranteed by the construction of $\w_{n+1}$.

Let $D_{i_1},D_{i_2},D_{j_1},D_{j_2}\in \bigvee_{i=0}^{n+1}T^{-i}\a_{n}$, and let their names
 be $B_{i_1}, B_{i_2}, B_{j_1}, B_{j_2}\in W_{n+2}$ respectively, where $1\le i_1,i_2,j_1,j_2\leq t$.
 Then by the definition of $\w_{n+1}$, pairs $(B_{i_1}, B_{j_1})$ and $(B_{i_2}, B_{j_2})$ appear
 in the word $\w_{n+1}$. Given arbitrary column $\mathfrak{c}_{n+1}^i$ with the base $C_{n+1}^i$,
 let $p$ be the position of $B_{i_1}$ in this column and let $r$ be the distance from the
 position of $B_{i_1}$ to the position of $B_{i_2}$. Then we have:
$$T^{p-1}C_{n+1}^i\subset D_{i_1}, T^{p-1+s_{n+1}}C_{n+1}^i\subset D_{j_1},T^{p-1+r}
C_{n+1}^i\subset D_{i_2},T^{p-1+r+s_{n+1}}C_{n+1}^i\subset D_{j_2}.$$
It follows that
\begin{equation*}
\begin{split}
T^{p-1}C_{n+1}^i\times T^{p-1+r}C_{n+1}^i&\subset (D_{i_1}\cap T^{-s_{n+1}}D_{j_1})\times
(D_{i_2}\cap T^{-s_{n+1}}D_{j_2})\\ & =(D_{i_1}\times  D_{i_2})\cap(T\times T)^{-s_{n+1}}( D_{j_1}\times  D_{j_2})
\end{split}
\end{equation*}
Hence
\begin{equation*}
\begin{split}
& \quad \ \mu\times\mu((D_{i_1}\times  D_{i_2})\cap(T\times T)^{-s_{n+1}}( D_{j_1}\times
D_{j_2}))\\ & \geq \mu\times\mu(T^{p-1}C_{n+1}^i\times T^{p-1+r}C_{n+1}^i)\ge e_{n+1}^2>0.
\end{split}
\end{equation*}
Thus $(1)_{n+1}$ holds.


\medskip
\noindent{\bf Properties of $\alpha$}:
So by the induction we have a sequence of partitions $\{\a_n\}$ and assume that limit
partition is $\a=\{A_1,A_2,\ldots,A_k\}$. It is clear $$d(\hat{\a},\a)<\sum_{i=0}^{\infty}\ep_i<\ep.$$
Also by the condition $(2)_n$, for $n\ge 1$ each sequence $\{\mathfrak{t}(C_n^{(j)})\}_{j\ge n}$
has a limit tower $\mathfrak{t}(C_n^*)$ with base $C_n^*$. And by $(2)_n$, $C_1^{*}\supset C_{2}^{*}\supset \ldots $.

\medskip

Now we show $\a$ is the partition  required. First we claim that $\a$ satisfies the following properties:

\begin{enumerate}
   \item For each $m\geq 0$, $E_1,F_1,E_2,F_2\in \bigvee_{j=0}^{m-1}T^{-j} \a$, we have that
$$\mu\times\mu((T\times T)^{s_m}(E_1\times F_1)\cap(E_2\times F_2)>0.$$

  \item Refine the towers $\mathfrak{t}(C_j^{*})$ according to $\a$ for each $j\ge 1$. If column $\mathfrak{c}$
  in the resulting tower $\mathfrak{t}(C_j^{*})$ has the $\a$-name $(a_1,a_2,\ldots,a_h)\in \Sigma^h$ and let its base be $C$, then the name satisfies
      \begin{equation}\label{2}
        a_{sl_j+t}=2 \ \text{for each}\ 0\leq t\leq j, 1\leq s\leq \tfrac{h-j-2}{l_j},
\end{equation}
 i.e. $T^{sl_j+t}C\subset A_2$.
\end{enumerate}

\medskip

Condition (2) is guaranteed by $(2)_n$. It is left to verify the condition (1). By condition $(1)_m$
there are $E'_1,F'_1,E'_2,F'_2\in \bigvee_{j=0}^{m-1}T^{-j} \a_{m-1}$ such that they have the same
names with $E_1,F_1,E_2,F_2$ respectively. By $(1)_{m}$
$$\mu\times\mu((T\times T)^{s_m}(E'_1\times F'_1)\cap(E'_2\times F'_2))>e_{m}^2.$$
Then by $d(\a_m,\a)<\sum_{j=m+1}^\infty \ep_j$, one has that
$$\mu\times\mu((T\times T)^{s_m}(E_1\times F_1)\cap(E_2\times F_2)>e_{m}^2-\sum_{j=m+1}^{\infty}\ep_j>0.$$

\medskip

Now using conditions (1) and (2) we will show $\a$ is what we need.
Let $X_{\a}$ be the corresponding symbolic representation of $\a$, and $\phi: X\rightarrow X_\a$ be
the factor map. Let $[i]_0=\{w\in X_\a: w_0=i\}$ for $i\in \{1,2,\ldots, k\}$.
Let $w=\phi(x)\in [1]_0$ be a transitive point of $(X_\a, T)$.

By property (1), $(X_\a,T)$ is weakly mixing. By property (2), $N(w, [2]_0)$ is
thickly syndetic, which implies that $N(w,[1]_0)$ is not piecewise syndetic. Hence the set of minimal points of $(X_\a,T)$ is not dense.
\end{proof}

Similar to Lemma \ref{inverse limit} we have the following easy observation.

\begin{lem}\label{inverse limit2}
Let $(X,T)$ be the inverse limit of $\{(X_n,T_n)\}_n$, where each $(X_n,T_n)$ is a
non-minimal topologically weakly mixing system whose set of minimal points is not dense.
Then $(X,T)$ is also a non-minimal topologically weakly mixing system whose set of minimal points is not dense.
\end{lem}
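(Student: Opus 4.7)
The plan is to mirror the structure of Lemma \ref{inverse limit}, but the argument for ``not dense'' is actually simpler than the argument for ``dense'' because non-density is a local obstruction that can be imported from a single factor, rather than built up from all of them. Let $\pi_n:X\to X_n$ denote the projection maps defining the inverse limit.

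First I would verify weak mixing of $(X,T)$: given nonempty open sets $U,V\subset X\times X$, shrink them to basic opens of the form $\pi_n^{-1}(U_n)\times \pi_n^{-1}(V_n)$ for some large $n$ and nonempty open $U_n,V_n\subset X_n$; then apply weak mixing of $(X_n\times X_n,T_n\times T_n)$ to find $m$ with $(U_n\times V_n)\cap (T_n\times T_n)^{-m}(U_n\times V_n)\neq \emptyset$ (I would state this carefully with two pairs of opens), and lift via $\pi_n\times \pi_n$. This is exactly the argument indicated at the end of Lemma \ref{inverse limit} and I would only sketch it.

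Second, for the non-density of minimal points, I would fix any one index $n$ and use the hypothesis that the minimal points of $(X_n,T_n)$ are not dense to select a nonempty open $U_n\subset X_n$ containing no minimal point. Then $W:=\pi_n^{-1}(U_n)$ is a nonempty open subset of $X$. The key observation is that if $x\in X$ were a minimal point, then $\overline{\mathcal{O}(x,T)}$ would be a minimal set whose image under the continuous factor map $\pi_n$ is a minimal subset of $X_n$, so $\pi_n(x)$ would be a minimal point of $(X_n,T_n)$. Therefore $\pi_n(x)\notin U_n$, i.e.\ $x\notin W$. Hence $W$ meets no minimal point of $(X,T)$, showing that minimal points of $(X,T)$ are not dense. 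Non-minimality of $(X,T)$ is then immediate, since a minimal system has every point minimal.

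There is no real obstacle here; the only thing to be slightly careful about is the standard basis of the inverse limit topology, so that opens in $X$ can be assumed of the form $\pi_n^{-1}(\cdot)$ for a single $n$. Everything else is formal and parallels Lemma \ref{inverse limit}, with the minor twist that ``not dense'' descends from a single factor whereas ``dense'' had to be lifted to the limit.
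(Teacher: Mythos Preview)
Your proposal is correct and matches the paper's treatment: the paper does not give a separate proof of this lemma but simply remarks that it is an easy observation analogous to Lemma~\ref{inverse limit}. Your argument is precisely the natural elaboration of that remark, with the appropriate twist that non-density of minimal points is pulled back from a single factor via the fact that factor maps send minimal points to minimal points.
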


Using the similar argument that we obtain Proposition \ref{increasing weak-nonminimal-dense minimal} from Proposition
\ref{weak-nonminimal-dense minimal}, and adjusting the proof of Proposition \ref{weak-nonminimal-dense nominimal},
we deduce the following result.

\begin{prop}\label{model for 2}
Every non-periodic ergodic system has a topological model which is a weakly mixing system with a full support and the set of minimal points is not dense.
\end{prop}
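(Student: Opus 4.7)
The plan is to combine a double inductive construction (modeled on the passage from Proposition \ref{weak-nonminimal-dense minimal} to Proposition \ref{increasing weak-nonminimal-dense minimal}) with the argument of Proposition \ref{weak-nonminimal-dense nominimal}, and then pass to an inverse limit. The first aim is to establish an increasing-partition version of Proposition \ref{weak-nonminimal-dense nominimal}: for any sequence $\{\beta_n\}$ of finite partitions with $\sigma(\beta_n)\nearrow \X$, one produces finite partitions $\{\gamma_n\}$ with $\gamma_n\prec\gamma_{n+1}$, $\sigma(\gamma_n)\nearrow\X$, such that each $(X_{\gamma_n},S)$ is a weakly mixing system with full support whose set of minimal points is not dense.

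The first step is the diagonal construction: fix a summable sequence $\{\ep_n\}$ with $\sum_n\ep_n<\infty$. At stage $m$, replace $\beta_m$ by $\beta_m\vee \gamma_{m-1}^{m-1}$ and perform the Step~$n+1$ machinery of Proposition \ref{weak-nonminimal-dense nominimal} on this augmented partition, allowing the cardinality parameter $k$ to depend on $m$ as in Proposition \ref{increasing weak-nonminimal-dense minimal}. Concretely, build the word $\w_m$ encoding all pairs of non-trivial $(m+2)$-cylinder names, invoke Lemma \ref{KR extra prime tower} to pick K-R towers $\mathfrak{t}(C_m^{(m)})$ of relatively prime column heights nested inside the towers from earlier stages, copy $\w_m$ into each column to force weak mixing, and then insert the symbol $2$ periodically with period $l_m$ to force thickly syndetic visits to $A_2$ (so minimal points lie outside $[1]_0$). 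This yields $\gamma_m^m$ with $d(\beta_m,\gamma_m^m)<\ep_m$ and also satisfies versions of conditions $(1)_m$ and $(2)_m$ from Proposition \ref{weak-nonminimal-dense nominimal}.

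Next I would fill in the triangular array $\{\gamma_k^n\}_{1\le k\le n}$ by the refinement procedure of Lemma \ref{Fact}, which guarantees $\gamma_k^n\prec\gamma_{k+1}^n$ for every $n$ and $d(\gamma_k^n,\gamma_k^{n-1})<\ep_n$ for every $k<n$. Then $\{\gamma_k^n\}_{n\ge k}$ is Cauchy in the partition metric, so $\gamma_k:=\lim_n \gamma_k^n$ exists, $\gamma_k\prec\gamma_{k+1}$, and $d(\gamma_k,\beta_k)<\sum_{s\ge k}\ep_s$, giving $\sigma(\gamma_k)\nearrow\X$. Exactly as in the proof of Proposition \ref{weak-nonminimal-dense nominimal}, the sequence $\{\gamma_k^n\}_{n\ge k}$ inherits both the pair-name copying (yielding weak mixing of $(X_{\gamma_k},S)$ via $(1)_m$ passed to the limit) and the periodic copies of $2$ (yielding thickly syndetic hits, hence non-density of minimal points) in the limit partition $\gamma_k$. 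Full support of each $X_{\gamma_k}$ is automatic from the definition of the symbolic representation.

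Finally, the inverse limit $(\hat X,\hat T)$ of $\{(X_{\gamma_k},S)\}_k$ is a topological model of $(X,\X,\mu,T)$ because $\sigma(\gamma_k)\nearrow\X$, and by Lemma \ref{inverse limit2} it is weakly mixing with non-dense set of minimal points and full support. The main obstacle in carrying this out is the bookkeeping in Step $n+1$ when the target partition $\beta_m\vee\gamma_{m-1}^{m-1}$ has a varying alphabet: one must verify that the nested towers $\mathfrak{t}(C_j^{(n)})$ track across both the vertical index $n$ (inductive construction) and the horizontal index $k$ (refinement via Lemma \ref{Fact}), so that the thickly syndetic $A_2$-pattern and the weak-mixing pair-name pattern persist simultaneously through all refinements. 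Once this compatibility is secured, property $(2)_i$ passes to the limit tower $\mathfrak{t}(C_j^{*})$ for every $j$, and the conclusion follows.
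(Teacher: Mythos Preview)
Your proposal is correct and follows essentially the same route as the paper: build the diagonal sequence $\{\gamma_m^m\}$ by running the Step~$m$ machinery of Proposition~\ref{weak-nonminimal-dense nominimal} on $\beta_m\vee\gamma_{m-1}^{m-1}$, fill in the triangular array via Lemma~\ref{Fact}, take Cauchy limits $\gamma_k$, and apply Lemma~\ref{inverse limit2} to the inverse limit. The one point you flag as ``the main obstacle'' but do not fully resolve is exactly what the paper handles explicitly: to make the thickly syndetic $A_2$-pattern survive the refinement map of Lemma~\ref{Fact}, the paper arranges that the \emph{first and second} elements of $\beta_n$ are subsets of the first and second elements of $\gamma_{n-1}^{n-1}$ respectively, so that under the coarsening $\gamma_n^n\mapsto\gamma_k^n$ the symbol $2$ is sent to the symbol $2$; this is what guarantees $(2)_i$ for every $\gamma_k^n$ and hence for $\gamma_k$.
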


\begin{proof}
The idea of the proof is similar to the one used in the proof of Proposition~ \ref{increasing weak-nonminimal-dense minimal}.
We will show that there exists an increasing sequence of finite partitions $\{\gamma_n\}$ such that
$\sigma(\gamma_n)\nearrow \X$ and for each $n\in \N$ the corresponding symbolic representation $(X_{\gamma_n},
\X_{\gamma_n},\rho_n,S)$ is a weakly mixing system with a full support and the set of minimal points is
not dense. Then by Lemma \ref{inverse limit2}, we finish the proof.

Let $(X,\X,\mu, T)$ be the ergodic system. Let $\{\b_n\}_{n\ge 0}$ be an increasing sequence of finite
partitions such that $\sigma(\b_n)\nearrow \X$. And let $\{\ep_n\}$ be a sequence of positive
numbers with $\sum_{n=0}^{\infty} \ep_n<\infty$.
We will modify the proof of Proposition \ref{weak-nonminimal-dense nominimal} carefully to get what we need.

As in the proof of Proposition \ref{weak-nonminimal-dense nominimal} we choose a
tower $\mathfrak{t}(C_0^{0})$, and adjust $\b_0$ by Step~ $0$ to get a new partition $\gamma_0^0$.
We replace $\b_1$ by $\b_1\bigvee \gamma_0^0$  (still denote it by $\b_1$), and it is clear $\gamma_{0}^{0}\prec \b_1$.
We assume that the first element (resp. second element) of $\b_1$ is a subset of the first element (resp. the second element) of $\gamma_{0}^{0}.$

As in Step 1 of the proof of Proposition \ref{weak-nonminimal-dense nominimal},
we modify $\b_1$ to deduce a new partition $\gamma_1^1$. We then construct a tower $\mathfrak{t}(C_1^{1})$
using Lemma \ref{KR extra prime tower}, and form a new tower $\mathfrak{t}(C_0^{1})$.

By Fact in the proof of Proposition \ref{increasing weak-nonminimal-dense minimal}, we
construct $\gamma_0^1\prec \gamma_1^1$. Refining $\gamma_0^1$ to $\mathfrak{t}(C_0^{1})$,
we know that $\gamma_0^1$ satisfying $(1)_1,(2)_1$ in Step 1 of the proof of
Proposition \ref{weak-nonminimal-dense nominimal} since $\b_1\succ \gamma_0^0$.

Inductively, we replace $\b_n$ by $\b_n\bigvee \gamma_{n-1}^{n-1}$. And we assume that
the first element (resp. second element) of $\b_n$ is a subset of the first element (resp. second element) of $\gamma_{n-1}^{n-1}$.

We modify $\b_n$ by Step $n$ to get a new partition $\gamma_n^n$ such that $d(\b_n, \gamma_n^n)<\ep_{n}$,
and by the same argument we know that $\gamma_n^n$ satisfies the same properties listed in $(1)_{n}$ and $(2)_n$ for the
tower $\mathfrak{t}(C_n^{(n)})$. Now construct $\gamma_k^n\prec \gamma_n^n$ by Lemma \ref{Fact}. Since the first and second elements of
$\b_n$ are subsets of the first and second elements of $\gamma_{n-1}^{n-1}$ respectively,
and $\b_n\succ \gamma_{n-1}^{n-1}$, we conclude that $\gamma_k^n$ satisfies the same
properties listed in $(1)_{n}$ and $(2)_n$  for the tower $\mathfrak{t}(C_j^{(n)})$, $k\leq j\leq n$.
By the proof of Proposition \ref{weak-nonminimal-dense nominimal}, the partition $\gamma_k=\lim_n\gamma_k^n$
satisfies properties as $(1),(2)$ there. Hence according to the proof of Proposition \ref{weak-nonminimal-dense nominimal},
$X_{\gamma_k}$ is a weakly mixing system with a full support and the set of minimal points is not dense.

Following the same discussion as in the proof of Proposition \ref{increasing weak-nonminimal-dense minimal},
we know that $\{\gamma_k\}$ is increasing and $\sigma(\gamma_k)\nearrow \X$. The proof is completed.
\end{proof}

\medskip

Now using Proposition \ref{model for 2}, we are able to finish the proof of Theorem \ref{main-result}-(2).

\begin{proof}[Proof of Theorem \ref{main-result}-(2)]
For a given ergodic system $(X,\X,\mu,T)$, by Proposition~ \ref{model for 2}, there is a topological model
$(Y,S)$ of $X$ with an ergodic measure $\rho$, which is weakly mixing, non-minimal and the set of
minimal point ${\rm Min} (Y)$ is not dense in $supp(\rho)=Y$. Note that $\rho(\overline{{\rm Min}( Y)})=0$,
since $\overline{{\rm Min}(Y)}$ is an $S$-invariant set.

Define an equivalence relation $'\thicksim'$ in $Y$ as follows:
$x \thicksim y$ if $x,y\in \overline{{\rm Min}(Y)}$.
Then the quotient system $(\h{X}=Y/\thicksim, \h{T})$ is a system that is measure theoretically
isomorphism to $(Y,S)$ since $\rho(\overline{{\rm Min}( Y)})=0$. Hence $(\h{X},\h{T})$ is also a
topological model of $(X,\X,\mu,T)$. Note that $(\h{X},\h{T})$ is a topologically weakly mixing
system with a full support and a unique fixed point as its only minimal point. Thus the proof is completed.
\end{proof}

\section{Applications}

In this section we give two applications of the results we obtained. Let $(X,T)$ be a topological dynamics
and $M(X)$ is the collection of all Borel probability measures on $X$ with the weak$^*$ topology. Then
$T$ induces a map $T_M$ on $M(X)$ naturally by sending $\mu\in M(X)$ to $T\mu$. An unsolved question in \cite{LYY13}
is that if there is a weakly mixing proximal system $(X,T)$ such that $(M(X),T_M)$ has dense minimal points.
We give an affirmative answer to this question. That is,

\begin{thm}\label{Appli1}
There is a weakly mixing proximal system $(X,T)$ such that $(M(X),T_M)$ has dense minimal points.
\end{thm}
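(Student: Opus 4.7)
The plan is to apply Theorem~\ref{main-result}-(2) to produce the proximal weakly mixing model $(X,T)$, and then to verify that $(M(X),T_M)$ has a dense set of minimal points. First, I would fix any non-periodic ergodic system and invoke Theorem~\ref{main-result}-(2). This yields a topological system $(X,T)$ that is topologically weakly mixing, proximal, and carries a full-support invariant measure $\hat\mu$. By proximality, $X$ has a unique fixed point $e$ and $\{e\}$ is the only minimal subset of $X$, so the minimal points of $(X,T)$ are not dense---which is exactly the contrast we want with $(M(X),T_M)$.

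Next, to establish dense minimal points in $(M(X),T_M)$, I would exploit that every $T$-invariant Borel probability measure is a fixed point of $T_M$, hence a minimal point, so $M_T(X)\subseteq {\rm Min}(M(X),T_M)$. To augment this with enough non-invariant minimal points, I would exploit the concrete symbolic construction in Section~4: the subshift produced in Proposition~\ref{weak-nonminimal-dense nominimal} is built by painting prescribed names on a Kakutani--Rokhlin tower, and this procedure creates a rich family of shift-invariant measures and almost-periodic orbits of $T_M$ beyond $\hat\mu$ and $\delta_e$. The factor-map principle (topological factor maps induce factor maps on the measure spaces, sending minimal points to minimal points and dense sets to dense sets) then allows density in $(M(X),T_M)$ to be transferred from a more tractable intermediate system---possibly one obtained from Theorem~\ref{main-result}-(1)---to the proximal model $(X,T)$.

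The main obstacle is precisely the density of minimal points itself. Since $M_T(X)$ is closed in $M(X)$ and a proper subset (unless $T$ is the identity), one cannot hope to achieve density using only $T$-invariant measures. In a proximal system, moreover, the naive sources of non-invariant minimal points---Diracs on minimal points of $X$, or empirical averages $\tfrac{1}{n}\sum_{i=1}^n\delta_{x_i}$ whose tuple $(x_1,\ldots,x_n)$ is minimal in $X^n$---all collapse to $\delta_e$, because every minimal subset of $X^n$ is $\{(e,\ldots,e)\}$ (by projecting and using that minimal subsets of $X$ are trivial). The delicate step is therefore to exhibit a dense family of almost-periodic but non-invariant measures in $(M(X),T_M)$; I expect this to be accomplished by choosing the initial ergodic system carefully (for instance a K-system, as in the other application in this section) and by a refined analysis of the painting scheme in the proof of Theorem~\ref{main-result}-(2), so that the resulting model carries enough minimal orbits of $T_M$ whose union is dense in $M(X)$.
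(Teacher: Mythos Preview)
Your diagnosis of the difficulty is accurate: invariant measures are not enough, and in a proximal system every Dirac on a minimal point of $X^n$ collapses to $\delta_e$. But your proposed resolution---choosing a K-system and doing a ``refined analysis of the painting scheme''---is a genuine gap, and in fact points in the wrong direction. A K-system has no nontrivial finite rotation factor, so there is no obvious source of periodic or almost-periodic measures to exploit; the symbolic painting in Section~5 is designed to control the \emph{topological} dynamics of $X$, not to manufacture minimal points in $M(X)$.

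The paper's proof makes the opposite choice: take the initial ergodic system to be the dyadic adding machine $(\Sigma_2,T,\mu)$. The crucial feature is that every cylinder $C$ satisfies $T^{2^{|C|}}C=C$, so the conditional measure $\mu_C=\mu(\,\cdot\cap C)/\mu(C)$ is a \emph{periodic point} of $(M(\Sigma_2),T_M)$. These periodic points are pushed through the measure-theoretic isomorphism $\pi:(\Sigma_2,\mu)\to(Y,\nu)$ using Lemma~\ref{lem:2-2}(3), yielding periodic points $\nu_{\pi(C\cap X_1)}$ of $(M(Y),S_M)$. For any open $U\subset Y$ one approximates $\pi^{-1}U$ by a finite union of cylinders and uses Lemma~\ref{lem:2-2}(1),(2) to approximate $\nu_U$ by a periodic measure. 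Finally, letting $U_n\downarrow\{y\}$ gives $\nu_{U_n}\to\delta_y$, so every Dirac---and hence every measure in $\bigcup_n M_n(Y)$, which is dense in $M(Y)$---is a limit of periodic points of $S_M$. The whole argument rests on the choice of a system with an abundance of periodic \emph{sets}, which your proposal does not supply.
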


To show this result we need a lemma from \cite{LYY13}.

\begin{lem} \label{lem:2-2}
Let $X, Y$ be two compact metric spaces, $\mu \in M(X)$ and $\nu \in
M(Y)$.

\begin{enumerate}
  \item  If $A=\bigcup_{i=1}^n A_i$, where $A_1, \ldots, A_n$ are Borel subsets of $X$ with $\mu(A_i)>0$ and $\mu(A_i \cap A_j)=0$
  for all $1 \le i<j \le n$, then $\mu_A=\sum_{i=1}^n \frac{\mu(A_i)}{\mu(A)} \mu_{A_i}$.
  \item Let $\epsilon>0$ and $A$ be a Borel subset of $X$ with $\mu(A)>0$. If $B$ is a Borel subset of $X$ such that
  $\mu(B)>0$ and $\mu(A \D B)< \mu(A) \cdot \epsilon$, then $d(\mu_A, \mu_B) \le 2\epsilon$.

  \item If $\pi: (X, \mu) \rightarrow (Y, \nu)$ is measurable and $\pi \mu=\nu$, then $\pi \mu_{\pi^{-1}A}=\nu_A$ for each Borel subset $A$ of $Y$.
\end{enumerate}
\end{lem}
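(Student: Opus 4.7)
The plan is to verify each of the three items by direct computation from the definition of the conditional probability measure $\mu_A$ on a positive-measure Borel set $A$, namely $\mu_A(E)=\mu(E\cap A)/\mu(A)$ for every Borel $E\subset X$. All three assertions are routine consequences of this definition, finite additivity of $\mu$, and the push-forward identity $\pi\mu(E)=\mu(\pi^{-1}E)$; I expect no deep obstacle, only a careful estimate in part~(2).

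For part~(1), I would fix an arbitrary Borel set $E\subset X$ and use the pairwise null intersections $\mu(A_i\cap A_j)=0$ to write $\mu(E\cap A)=\sum_{i=1}^n\mu(E\cap A_i)$. Dividing by $\mu(A)$ and inserting $\mu(A_i)/\mu(A_i)$ gives
\[
\mu_A(E)=\frac{1}{\mu(A)}\sum_{i=1}^n\mu(E\cap A_i)=\sum_{i=1}^n\frac{\mu(A_i)}{\mu(A)}\,\mu_{A_i}(E),
\]
which is the claimed identity as an equality of measures since $E$ was arbitrary.

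For part~(2), the main estimate that I would combine is $|\mu(A)-\mu(B)|\le\mu(A\,\Delta\,B)<\mu(A)\epsilon$ together with $|\mu(E\cap A)-\mu(E\cap B)|\le\mu(A\,\Delta\,B)<\mu(A)\epsilon$ for any Borel $E$. Bringing $\mu_A(E)-\mu_B(E)$ to the common denominator $\mu(A)\mu(B)$, the numerator becomes $\mu(E\cap A)\mu(B)-\mu(E\cap B)\mu(A)$; writing $\mu(E\cap A)=\mu(E\cap B)+\delta$ with $|\delta|<\mu(A)\epsilon$, this numerator equals $\mu(E\cap B)(\mu(B)-\mu(A))+\delta\,\mu(B)$, hence is bounded in absolute value by $2\mu(A)\mu(B)\epsilon$. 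Dividing gives $|\mu_A(E)-\mu_B(E)|\le 2\epsilon$ uniformly in $E$, so in the total-variation (equivalently, the relevant weak-$*$ compatible) metric $d$ used in \cite{LYY13} one gets $d(\mu_A,\mu_B)\le 2\epsilon$. The slightly subtle point here is only the bookkeeping that $\mu(B)$ is comparable to $\mu(A)$, which is handled by the preliminary estimate above.

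For part~(3), I would unwind the definitions and use $\pi^{-1}(E\cap A)=\pi^{-1}E\cap\pi^{-1}A$ and the hypothesis $\pi\mu=\nu$: for any Borel $E\subset Y$,
\[
\pi\mu_{\pi^{-1}A}(E)=\mu_{\pi^{-1}A}(\pi^{-1}E)=\frac{\mu(\pi^{-1}E\cap\pi^{-1}A)}{\mu(\pi^{-1}A)}=\frac{\mu(\pi^{-1}(E\cap A))}{\mu(\pi^{-1}A)}=\frac{\nu(E\cap A)}{\nu(A)}=\nu_A(E),
\]
which is the required equality of measures on $Y$. No technical obstacle is expected in this part either; the whole lemma is a collection of elementary facts that will be invoked as black boxes in the application to $(M(X),T_M)$.
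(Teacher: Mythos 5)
Your verification is correct. Note that the paper itself gives no proof of this lemma: it is imported verbatim from \cite{LYY13}, so there is no argument in the text to compare against; what you have written is the standard direct verification from the definition $\mu_A(E)=\mu(E\cap A)/\mu(A)$, and parts (1) and (3) are complete as stated. The only point in part (2) that deserves a word of care is the one you flag yourself: your uniform estimate $|\mu_A(E)-\mu_B(E)|\le 2\epsilon$ over Borel sets $E$ yields $d(\mu_A,\mu_B)\le 2\epsilon$ for the Prokhorov metric, or for any weak$^*$-compatible metric built from test functions with values in $[0,1]$ (since then $\bigl|\int f\,d\mu_A-\int f\,d\mu_B\bigr|\le\sup_E|\mu_A(E)-\mu_B(E)|$); if $d$ were instead the full total-variation norm, the same computation would only give $4\epsilon$, because for two probability measures $\|\lambda\|_{TV}=2\sup_E|\lambda(E)|$. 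With the metric actually used in \cite{LYY13} the constant $2$ is correct, so this is a matter of normalization rather than a gap.
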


\medskip

\noindent {\it Proof of  Theorem \ref{Appli1}:} Let $(\Sigma_2, T)$ be the dyadic adding machine with a unique
ergodic measure $\mu$.  By Theorem \ref{main-result} $(\Sigma_2, T, \mu)$ is isomorphic to $(Y, S, \nu)$,
where $(Y, S)$ is a weakly mixing proximal topological system and $\nu$ has full support. We now
show  that the set of periodic points of $(M(Y), S_M)$ is dense.

Let $\pi: (\Sigma_2, T, \mu) \rightarrow (Y, S, \nu)$ be an
isomorphism, that is,  there are invariant Borel subsets $X_1
\subset X$ and $X_2 \subset Y$ with $\mu(X_1)=\nu(X_2)=1$ and an
invertible measure-preserving transformation $\pi: X_1 \rightarrow
X_2$ such that $\pi(Tx)=S\pi(x)$ for all $x \in X_1$.

Let $\epsilon>0$ and let $U$ be a non-empty open subset of $Y$.
Since $\nu$ has full support, we have $\nu(U)>0$.
Thus, there are finitely many pairwise disjoint cylinders $A_1,
\ldots, A_k$ of $X$ such that $\mu(\pi^{-1}U \D A)<
\nu(U) \cdot \epsilon$ with $A=\bigcup_{i=1}^k A_i$, which implies
$\nu(U \D \pi(A \cap X_1))<\nu(U) \cdot \epsilon$. Using
Lemma \ref{lem:2-2} (2), $d(\nu_U, \nu_{\pi(A \cap X_1)})\le
2\epsilon$. Since $T^{2^{|C|}}C=C$ for each cylinder $C$ of $X$,
where $|C|$ stand for the length of $C$, we conclude that $\mu_C$ is
periodic. In particular, each $\mu_{A_i}$ is periodic. By Lemma
\ref{lem:2-2} (3), each $\nu_{\pi(A_i \cap X_1)}$ is also periodic.
By Lemma \ref{lem:2-2} (1), $\nu_{\pi(A \cap X_1)}=\sum_{i=1}^k p_i
\nu_{\pi(A_i \cap X_1)}$, where $p_i=\mu(A_i)/\mu(A)$. Thus,
$\nu_{\pi(A \cap X_1)}$ is periodic. It follows that $\nu_U$ is
approached by periodic points of $(M(Y), S_M)$.

Now take $y \in Y$ and let $\{U_n\}_{n=1}^{\infty}$ be a sequence of
open neighborhoods  of $y$ such that $\mathrm{diam}(U_n) \rightarrow
0$. For any $f \in C(Y, \mathbb{R})$, we have
$$\left|\int_Y \, f(z) \, \mathrm{d}\nu_{U_n}-f(y)\right|\le \int_{U_n} \left|f(z)-f(y)\right| \mathrm{d}\nu_{U_n} \rightarrow 0.$$
A simple calculation  shows $\nu_{_{U_n}} \rightarrow \delta_y$, and
hence $\delta_y$ is a limit point  of $P(S_M)$. This implies that
each element of $M_n(Y)=\{\frac{1}{n}\sum_{i=1}^n \delta_{x_i}: x_i \in X \}$ is approached by elements of $P(S_M)$.
Since $\bigcup_{n=1}^{\infty} M_n(Y)$ is dense in $M(Y)$, it follows
that $(M(Y), S_M)$ is a $P$-system. This ends the proof.

\medskip

Another application of our result is the following.
A topological analogy of K-systems, called {\it topological K-system} was studied in \cite{HY06}.
In \cite{HLY12} the authors constructed a proximal topological K-system which is weakly mixing. Using Theorem \ref{main-result},
we can get a lot of such examples which are strongly mixing.

\begin{thm}
There exist strongly mixing proximal topological K-systems.
\end{thm}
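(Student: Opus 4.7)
The plan is to deduce the theorem directly from Theorem \ref{main-result}(2) by starting with a measure-theoretic K-system that already carries strong mixing in the Kolmogorov sense, and then transferring its ergodic-theoretic properties to the topological model. Concretely, I would begin with any Bernoulli shift $(X,\X,\mu,T)$; this is non-periodic, ergodic, strongly mixing (even $m$-fold mixing for every $m$), and of course a K-system.

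Applying Theorem \ref{main-result}(2) to $(X,\X,\mu,T)$, I obtain a topological model $(\hat X,\hat T)$ with invariant Borel probability measure $\hat\mu$ such that $(\hat X,\hat\X,\hat\mu,\hat T)\cong(X,\X,\mu,T)$ measure-theoretically, where $(\hat X,\hat T)$ is topologically weakly mixing, proximal, and $\mathrm{supp}(\hat\mu)=\hat X$. Because the measure-theoretic isomorphism preserves the K-property and strong mixing, $\hat\mu$ is itself a K-measure and is strongly mixing.

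The remaining task is to promote these two measure-theoretic facts, in the presence of full support, to topological statements on $(\hat X,\hat T)$. For strong mixing: given any nonempty open $U,V\subset \hat X$, full support yields $\hat\mu(U),\hat\mu(V)>0$, and $\hat\mu(U\cap\hat T^{-n}V)\to\hat\mu(U)\hat\mu(V)>0$ forces $U\cap\hat T^{-n}V\neq\emptyset$ for all sufficiently large $n$, so $(\hat X,\hat T)$ is topologically strongly mixing. For the topological K-property: I would invoke Blanchard's theorem that an invariant K-measure of full support implies uniform positive entropy of all orders, i.e. the topological K-property (this is the key input linking the measurable and topological worlds and is really the only nontrivial citation beyond the paper's own Theorem~\ref{main-result}). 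Combining this with the proximality and full support already supplied by Theorem~\ref{main-result}(2), we conclude that $(\hat X,\hat T)$ is simultaneously proximal, a topological K-system, and topologically strongly mixing, proving the theorem. The main obstacle is not in the construction but in correctly invoking the passage from K-measure-of-full-support to topological K; the construction itself is a one-line application of Theorem~\ref{main-result}(2) to a well-chosen measurable system, and different choices of strongly mixing K-system (Bernoulli, mixing Markov chains, Anosov diffeomorphisms with SRB measure, etc.) produce the promised abundance of examples.
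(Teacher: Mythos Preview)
Your proposal is correct and follows essentially the same route as the paper: start from a measurable K-system, apply Theorem~\ref{main-result}(2) to obtain a proximal fully supported model, deduce topological strong mixing from measure-theoretic strong mixing plus full support, and then cite the fact that a fully supported K-measure forces the topological K-property. The only cosmetic differences are that the paper does not single out Bernoulli shifts but works with an arbitrary measurable K-system, and for the last implication it cites \cite[Theorem~3.4]{HY06} rather than Blanchard.
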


\begin{proof}
Let $(X, T, \mu)$ be a measurable K-system. By Theorem \ref{main-result}
$(X, T, \mu)$ is isomorphic to a proximal system $(Y,S)$ with a measure $\nu$ of full support.
Thus $(Y,S)$ is strongly mixing, since a K-system is strongly mixing in the measurable sense.
At the same time we know that $(Y,S)$ is topological K by \cite[Theorem 3.4]{HY06}.
\end{proof}


\end{document}